\newcommand{\ga}{\gamma}
\newcommand{\A}{\mathbb{A}}
\newcommand{\QQ}{\mathbb{Q}}
\newcommand{\CC}{\mathbb{C}}
\numberwithin{equation}{section}
 \theoremstyle{plain}
\newtheorem{theorem}{Theorem}[section]
\newtheorem{lemma}[theorem]{Lemma}
\newtheorem{proposition}[theorem]{Proposition}
\theoremstyle{definition}
\newtheorem{definition}[theorem]{Definition}
\newtheorem{convention}[theorem]{Convention}
\theoremstyle{remark}
\newtheorem*{rem}{Remark}
\newcounter{remarkscounter}
\newenvironment{remarks}
{\medskip\noindent{\it
Remarks.}\begin{list}{{\rm(\arabic{remarkscounter})}
}{\usecounter{remarkscounter}

\setlength{\labelsep}{\fill} \setlength{\leftmargin}{0pt}
\setlength{\itemindent}{\fill}
\setlength{\labelwidth}{\fill}\setlength{\topsep}{0pt}
\setlength{\listparindent}{0pt}}} {\end{list}}
\begin{document}

\title{A simple twisted relative trace formula}
\author{Heekyoung Hahn}

\address{Department of Mathematics and Statistics, University at Albany, Albany, NY 12222}
\email{hhahn@albany.edu}

%\thanks{Research is supported in part by a National Science Foundation FRG grant (DMS 0244660)**}

\begin{abstract}
In this article we derive a simple twisted relative trace formula.
\end{abstract}

\maketitle

%%Introduction-------------------------------------------------------------
\section{Introduction}

In this paper, we obtain a simple twisted relative trace formula analogous to the simple trace formula of Deligne and Kazhdan  \cite{BDKV} (see also \cite{Ro}).  We hope this simple twisted relative trace formula can be used to study distinguished admissible representations of a reductive group. A similar formula has already proved useful in the study of distinguished admissible representations of general linear groups \cite{HM}. Our formula is also used  crucially in \cite{GW}. We will state the main result of \cite{GW} after stating our main theorem.

Let $M/F$ be a quadratic or trivial extension of a number field $F$ and $\tau$ be the generator of $\text{Gal}(M/F)$. Assume that $G_0$ is a connected reductive $F$-group with semisimple automorphism $\sigma$ of order $2$. We denote by  $G_0^{\sigma}$ the group whose points in an $F$-algebra $R$ are given by
$$G_0^{\sigma}(R)=\{ g\in G_0(R) : g^{\sigma}=g\}.$$Then $G_0^{\sigma}$  is reductive \cite[\S 9]{S2}. Write
\begin{align*}
G&:=\text{Res}_{M/F}G_0,\\
G^{\sigma}&:=\text{Res}_{M/F}G_0^{\sigma},
\end{align*}set $\theta=\sigma\circ\tau$, and write $G^{\theta}\leq G$ for the subgroup whose points in an $F$-algebra $R$ are given by
$$G^{\theta}(R)=\{ g\in G(R) : g^{\theta}=g\}\,.$$Note that $\sigma$ and $\tau$ commute.

In order to state our main result, we define the notion of an $F$-supercuspidal function.

%%%%%%Supercuspidal definition
\begin{definition}
Let $v$ be a finite place of $F$ and let $f_v\in C_c^{\infty}(G(F_v))$. We say that $f_v$ is $F$-{\it supercuspidal} if any translate of $f_v$ (left or right) has zero integral along the unipotent radical of any proper parabolic of $G_{F_v}$ that is defined over $F$; i.e.~is the base change of a parabolic subgroup of $G$.
\end{definition}

Let $$\mu:=\mu_1\times\mu_2: G^{\sigma}(\mathbb{A}_F)\times G^{\theta}(\mathbb{A}_F)\longrightarrow \mathbb{C^{\times}}$$ be a unitary character trivial on $G^{\sigma}(F)\times G^{\theta}(F)$.
We now state our main result using notations that are defined precisely in \S 2 below.

%%%%%%%%%%%%%%%%%%%%%%%%%%%%%%%%%
%-----------------------------------Main Theorem ------------------------------%
%%%%%%%%%%%%%%%%%%%%%%%%%%%%%%%%%%
\begin{theorem}\label{main}
Suppose that $G^{\sigma}$ is connected. Let $f=\otimes_v f_v\in C_c^{\infty}(G(\mathbb{A}_F))$ be such that there exist places $v_1, v_2, v_3$ of $F$ (not necessarily distinct) such that
\begin{enumerate}
\item{$f_{v_1}$ is supported on relatively $\tau$-elliptic elements of $G(F_{v_1})$,}
\item{$f_{v_2}$ is supported on strongly relatively $\tau$-regular elements of $G(F_{v_2})$,}
\item{$f_{v_3}$ is $F$-supercuspidal.}
\end{enumerate}
Then we have that
\begin{align}\label{main-eqn}
\sum_{\{\gamma\}}c^{\tau}(\gamma) \mathrm{TRO}^{\mu}_{\gamma}(f)=\sum_{\pi}\mathrm{TRT}_{G^{\sigma}, G^{\theta}}^{\mu}(\pi(f^1)),
\end{align}where the sum on the left is over relevant strongly relatively $\tau$-regular elliptic  classes in $G(F)$ and the sum on the right is over equivalence classes of automorphic representations $\pi$ of ${}^1G(\mathbb{A}_F)$ admitting a realization in $L_0^2(G(F)\backslash {}^1G(\mathbb{A}_F)$. Here $f^1$ is a normalization of $f$ defined in \eqref{f1} and ${}^1G(\mathbb{A}_F)$ is the Harish-Chandra subgroup of $G(\mathbb{A}_F)$(see \S 2).
\end{theorem}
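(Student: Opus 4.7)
The plan is to evaluate, in two different ways, the automorphic distribution
$$J^{\mu}(f) := \int_{G^{\sigma}(F)\backslash {}^1G^{\sigma}(\mathbb{A}_F)} \int_{G^{\theta}(F)\backslash {}^1G^{\theta}(\mathbb{A}_F)} K_{f^1}(x,y)\, \mu(x,y)\, dx\, dy,$$
where $K_{f^1}(x,y) = \sum_{\delta \in G(F)} f^1(x^{-1}\delta y)$ is the usual automorphic kernel of $R(f^1)$ and $f^1$ is the normalization from \eqref{f1}. The left side of \eqref{main-eqn} will emerge from collapsing the sum in $K_{f^1}$ into $G^{\sigma}(F) \times G^{\theta}(F)$-orbits in $G(F)$ (for the twisted action, in which the $\tau$-dependence is built in through $\theta=\sigma\circ\tau$), and the right side from inserting the spectral expansion of the kernel.

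For the geometric side, grouping the sum defining $K_{f^1}(x,y)$ by orbits $\{\gamma\}$ and folding the outer integration over $G^{\sigma}(F)\times G^{\theta}(F)$ into the stabilizer quotients produces, formally, $\sum_{\{\gamma\}} c^{\tau}(\gamma)\, \mathrm{TRO}^{\mu}_{\gamma}(f)$. Hypothesis (2) rules out contributions from orbits that are not strongly relatively $\tau$-regular, guaranteeing that the stabilizers are tori and the local orbital integrals defining $\mathrm{TRO}^{\mu}_{\gamma}$ converge absolutely; hypothesis (1) restricts further to the relatively $\tau$-elliptic orbits, whose stabilizer quotients are compact, so that only the ``relevant'' elliptic classes appearing on the left survive and Fubini applies to the outer integral.

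For the spectral side, one writes $K_{f^1}(x,y)$ as a sum $\sum_{\pi} K_{\pi,f^1}(x,y)$ over discrete automorphic $\pi$ plus an Eisenstein contribution, and plugs into $J^{\mu}(f)$. The key reduction comes from hypothesis (3): $F$-supercuspidality of $f_{v_3}$ kills every term indexed by an $F$-rational proper parabolic, because the associated Eisenstein kernel is obtained from $f^1$ by a convolution whose inner integral along the unipotent radical of that parabolic vanishes by definition. This collapses the expansion to representations $\pi \subset L_0^2(G(F)\backslash {}^1G(\mathbb{A}_F))$, and integrating each surviving $\pi$-contribution against $\mu$ yields $\mathrm{TRT}^{\mu}_{G^{\sigma}, G^{\theta}}(\pi(f^1))$ by definition.

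The main obstacle is justifying all the interchanges of summation and integration. A priori $J^{\mu}(f)$ is only a formal double integral, so one has to use the three hypotheses on $f$ jointly to show that the summand becomes essentially compactly supported on the product quotient, making both decompositions absolutely convergent. The most delicate point is that $F$-supercuspidality at $v_3$ is imposed only relative to parabolics that descend to $F$, not all $F_{v_3}$-rational parabolics; one must verify that every Eisenstein piece of $K_{f^1}$ that survives in this twisted relative setting is indexed by an $F$-rational parabolic, so the hypothesis at $v_3$ really does suffice. Combined with the regularity and ellipticity control at $v_1, v_2$ on the geometric side, comparing the two expansions of $J^{\mu}(f)$ then yields \eqref{main-eqn}.
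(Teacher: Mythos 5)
Your proposal is essentially the paper's proof: the two evaluations of $\iint \mu(x,y^{-1})K_{f^1}(x,y)\,dx\,dy$ are packaged as Theorem \ref{geometric} (geometric side, with hypotheses (1), (2) and a Galois cohomology argument reducing the sum to finitely many relevant strongly relatively $\tau$-regular elliptic classes) and Proposition \ref{spectral} together with Lemma \ref{L_0} (spectral side, with $F$-supercuspidality at $v_3$ forcing $R(f^1)$ to have image in the cuspidal subspace, so the kernel $K_{f^1}$ is already purely discrete). The subtlety you flag about $F$-rational versus $F_{v_3}$-rational parabolics is not an obstruction: the noncuspidal spectrum of $L^2(G(F)\backslash {}^1G(\mathbb{A}_F))$ is parametrized by associate classes of parabolics defined over $F$, which is exactly what the $F$-supercuspidality hypothesis controls in the proof of Lemma \ref{L_0}.
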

%%%%%%End of main theorem statements%%%%%%%%%%%%%%%%%%%%%%%

\begin{remarks}
\item{In the theorem, $c^{\tau}(\gamma)$ is a volume term, $\mathrm{TRO}^{\mu}_{\gamma}(f)$ is a twisted relative orbital integral as in \eqref{TRO}, and $\mathrm{TRT}_{G^{\sigma}, G^{\theta}}^{\mu}(\pi(f^1))$ is a twisted relative trace as in \eqref{TRT}.} 
\item{If $f^1$ is $K_{\infty}$-finite, where $K_{\infty}$ is a maximal compact subgroup of $G(\A_{F,\infty})$, then 
$$
\mathrm{TRT}_{G^{\sigma},G^{\theta}}^{\mu}(\pi(f^1))=\sum_{\phi \in \mathcal{B}_{\pi}} \mathcal{P}^{\sigma}_{\mu_1}(\pi(f^1)\phi) \overline{\mathcal{P}^{\theta}_{\mu_2}(\phi)},
$$
where $\mathcal{P}^{\sigma}_{\mu_1}(\phi)$ and $\mathcal{P}^{\theta}_{\mu_2}(\phi)$ are period integrals of $\phi$ over $G^{\sigma}$ and $G^{\theta}$, respectively, (see \eqref{Period_int1} and \eqref{Period_int2} for definitions). Here $\mathcal{B}_{\pi}$ is an orthonormal basis of the $\pi$-isotypic subspace of $L_0^2(G(F)\backslash {}^1G(\mathbb{A}_F))$ with respect to the pairing \eqref{pair} that consists of smooth $K_{\infty}$-finite vectors.}
\item{The condition that $G^{\sigma}$ is connected in the theorem is used to prove Proposition \ref{one} and to apply \cite[\S 2, Proposition 1]{AGR} to prove Proposition \ref{spectral}.}
\end{remarks}

\medskip
Theorem \ref{main} will be restated and proven in \S \ref{Se}. The proof of the theorem roughly follows \cite{BDKV}. However, our proof uses Galois cohomology in place of reduction theory to prove that the geometric side of the twisted relative trace formula is a finite sum.

In \cite{GW}, our simple twisted relative trace formula is used to prove the following result: Let $U'$ be a quasi-split unitary group of odd rank $n$ with respect to a CM extension $M/F$, let $E/F$ be a totally real quadratic extension with $\langle \sigma \rangle=\text{Gal}(E/F)$, and let $U:=\mathrm{Res}_{E/F}U'$. Under suitable local hypotheses, a cuspidal cohomological automorphic representation $\pi$ of $U$ such that $\pi^{\vee} \cong \pi^{\sigma}$ is nearly equivalent to cohomological automorphic representation $\pi'$ of $U$ that is $U'$-distinguished in the sense that there is a form in the space of $\pi'$ admitting a nonzero period over $U'$.

%%Preliminaries------------------------------------------------------------------------------------------------------------
\section{Preliminaries}

%%%Algebraic groups
\subsection{Algebraic groups}
Let $G_0$ be an algebraic group over a field $F$.  We write $Z_{G_0}$ for the center of $G_0$ and $G_0^{\circ}$ for the identity component of $G_0$ in the Zariski topology. If $\gamma\in G_0(F)$ and $G'_0\leq G_0$ is a subgroup we write $C_{G_0', \gamma}$ for the centralizer in $G'_0$ of $\gamma$. 

Let $M/F$ be a quadratic extension of $F$-algebras. Thus $M$ is either a field or $M=F\oplus F$.
Let $G:=\text{Res}_{M/F}G_0$ and let $\tau$ be the automorphism of $G$ induced by the generator of $\text{Gal}(M/F)$. Assume $G_0$ (and hence $G$) is connected. As usual, an element $\gamma\in G(F)$ is said to be $\tau$-semisimple if $\ga\ga^{\tau}$ is semisimple. A $\tau$-semisimple element $\ga$ is $\tau$-elliptic (resp. $\tau$-regular semisimple, $\tau$-regular elliptic) if $C^{\tau}_{G, \gamma}/Z_{G_0}$ is anisotropic (resp. a torus, an anisotropic torus).

If $\sigma$ is an automorphism of $G$ and $\gamma\in G(R)$ for some commutative $F$-algebra $R$ we write
$$\gamma^{-\sigma}:=(\gamma^{-1})^{\sigma}\,.$$

%%Adeles
\subsection{Ad\`{e}les}
The ad\`{e}les of a number field $F$ will be denoted by $\mathbb{A}_F$. For a set of places $V$ of $F$ we write $\mathbb{A}_{F, V}:=\otimes'_{v\in V}F_v$ and $\mathbb{A}_F^V:=\otimes'_{v\notin V} F_v$. Here $\otimes'$ denotes the typical restricted direct product. The set of infinite places of $F$ will be denoted by $\infty$. For an affine $F$-variety $G$ and a subset $W\leq G(\mathbb{A}_F)$ the notation $W_V$ (resp. $W^V$) will denote the projection of $W$ to $G(\mathbb{A}_{F, V})$ (resp. $G(\mathbb{A}_F^V)$). If $W$ is replaced by an element of $G(\mathbb{A}_F)$, or if $G$ is an algebraic group and $W$ is replaced by a character of $G(\mathbb{A}_F)$ or a Haar measure on $G$, the analogous notation will be in force; for example, if $\gamma\in G(\mathbb{A}_F)$ then $\gamma_v$ is the projection to $G(F_v)$ of $\gamma$.

%%Harish-Chandra
\subsection{Harish-Chandra subgroups}
Let $G$ be a connected reductive group over a number field $F$. We write $A_G\leq Z_G(F\otimes_{\mathbb{Q}}\mathbb{R})$ for the connected component of the real points of the largest $F$-split torus in the center of $G$. Write $X^*$ for the group of $F$-rational characters of $G$ and write $\mathfrak{a}_G:=\text{Hom} (X^*, \mathbb{R})$. The Harish-Chandra morphism $$HC_G: G(\mathbb{A}_F)\longrightarrow \mathfrak{a}_G$$is defined by
$$\langle HC_G(x), \chi\rangle=|\log (x^{\chi})|$$for $x\in G(\mathbb{A}_F)$ and $\chi\in X^*$. We write
\begin{equation}\label{ker}
{}^1G(\mathbb{A}_F):=\mathrm{ker}(HC_G)
\end{equation}and refer to it as the Harish-Chandra subgroup. Note that $G(\mathbb{A}_F)$ is the direct product of $A_G$ and ${}^1G(\mathbb{A}_F)$, and that $G(F)\leq {}^1G(\mathbb{A}_F)$.

%%%%%%%%%%%%%%%%%%%%%%%%%%%%%%%%%%%%%%%%%%%%%%%%%%
\subsection{Definitions and notations}

Let $G_0$ be a connected reductive $F$-group with automorphism $\sigma$ of order $2$ and let
$G_0^{\sigma}$, $G$, etc.~be defined as in the introduction.
We have left actions of $G_0^{\sigma} \times G_0^{\sigma}$ on $G_0$ and $G^{\sigma} \times G^{\theta}$ on $G$ given by
\begin{align}
(G_0^{\sigma}\times G_0^{\sigma})(R) \times G_0(R) &\longrightarrow G_0(R)\label{action_G0}\\
((g_1,g_2), g) &\longmapsto g_1gg_2^{-1},\nonumber\\
(G^{\sigma} \times G^{\theta})(R) \times G(R) &\longrightarrow G(R)\label{action_G}\\
((g_1,g_2),g) &\longmapsto  g_1gg_2^{-1}\nonumber
\end{align}
for $F$-algebras $R$. Following \cite[Lemma 2.4]{Ri} we introduce the subschemes $Q\subset G_0$ and $S\subset G$ defined as the scheme theoretic images of the morphisms given on $R$-points by
\begin{align}
B_{\sigma}\,:\,G_0(R) &\longrightarrow G_0(R)\label{Q}\\
 g&\longmapsto gg^{-\sigma}\,, \nonumber\\
B_{\theta}\,:\,G(R) &\longrightarrow G(R)\label{S}\\
 g&\longmapsto gg^{-\theta}\,, \nonumber
\end{align}
respectively. Here $R$ is a (commutative) $F$-algebra. The $F$-schemes $Q$ and $S$ are closed affine $F$-subschemes of $G_0$ and $G$, respectively (see \cite[\S 2.1]{HW}). Note that
$$
B_{\sigma}((g_1, g_2)\cdot g)=g_1B_{\sigma}(g)g_1^{-1}\quad\text{and}\quad
B_{\theta}((g_1, g_2)\cdot g)=g_1B_{\theta}(g)g_1^{-\tau},
$$where $\cdot$ denotes the left actions defined by \eqref{action_G0} and \eqref{action_G} respectively.

\medskip
%%Definition of tau semisimple---------------------------------------------------------------
\noindent The following definition is an adaptation of one appearing in \cite{F} and \cite{JL} (see \cite{GW}):
\begin{definition}
Let $k$ be a field extension of $F$. An element $\gamma\in G(k)$ is relatively $\tau$-semisimple (resp. relatively $\tau$-elliptic, relatively $\tau$-regular semisimple, relatively $\tau$-regular elliptic) if $\gamma\gamma^{-\theta}$ is $\tau$-semisimple (resp. $\tau$-elliptic,  $\tau$-regular semisimple, $\tau$-regular elliptic) as an element of $G(k)$.
\end{definition}

%%Definition of G^{sigma-theta}------------------------------------------------------------
\begin{definition}
For any $F$-algebra $k$ and any $\gamma\in G(k)$, write $G^{\sigma\theta}_{\gamma}$ for the $k$-group whose points in a commutative $k$-algebra $R$ are given by
\begin{equation}\label{G_sigma_theta}
G^{\sigma\theta}_{\gamma}(R):=\{ (x,y)\in G^{\sigma}(R)\times G^{\theta}(R) : x^{-1}\gamma y=\gamma\}.
\end{equation}
Write $C^{\tau}_{G^{\sigma}, \gamma}$ for the $\tau$-centralizer of $\gamma$ in $G^{\sigma}$.  Explicitly, $C^{\tau}_{G^{\sigma},\gamma}$ is the $k$-group whose points in a commutative $k$-algebra $R$ are given by
\begin{equation}\label{centralizer}
C^{\tau}_{G^{\sigma}, \gamma}(R):=\{ x\in G^{\sigma}(R) : x\gamma x^{-\tau}=\gamma \}.
\end{equation}
\end{definition}

%%%Definition of strongly relatively regular
\begin{definition} Let $k$ be an $F$-algebra.  A relatively $\tau$-semisimple element $\gamma \in G(k)$  is strongly relatively $\tau$-regular if $C^{\tau}_{G^{\sigma}, \ga\ga^{-\theta}}$ is a (connected)  torus.
\end{definition}
\noindent Thus a strongly relatively $\tau$-regular element is relatively $\tau$-regular semisimple.

\medskip

%%Definition of relative tau class---------------------------------------------------
For an $F$-algebra $R$, write
\begin{equation}\label{class}
\Gamma_r(R):=G^{\sigma}(R)\backslash G(R)/G^{\theta}(R)\,.
\end{equation}We refer to the elements of $\Gamma_r(R)$ as {\bf relative $\tau$-classes}. If two elements $\gamma, \gamma'\in G(R)$ map to the same class in $\Gamma_r(R)$, we say that they are in the same relative $\tau$-class.

%%Definition of stable relative tau class-------------------------------------------------------
Let $k$ be $F$ or $F_v$ for some place $v$ of $F$. Then we say that two relatively $\tau$-semisimple $\gamma, \gamma'\in G(k)$ are in the same {\bf geometric relative $\tau$-class} if there exists $(\overline{x}, \overline{y})\in G^{\sigma}(\overline{k})\times G^{\theta}(\overline{k})$ such that $\overline{x}^{-1}\gamma \overline{y}=\gamma'$. We write $\Gamma_r^{\mathrm{ge}}(k)$ for the set of geometric relative $\tau$-classes.  Notice that if $\gamma$ and $\gamma'$ are in the same geometric relative $\tau$-class then $\gamma$ is strongly relatively $\tau$-regular  if and only if $\gamma'$ is as well.

%%Lemma-closed-----------------------------------------------------------------------
\begin{lemma}\label{closed}
Let $k$ be $F$ or $F_v$ for some place $v$ of $F$. If $\gamma\in G(k)$ is relatively $\tau$-semisimple then the $G^{\sigma}$-orbit of $\gamma\gamma^{-\theta}$ in $S_k$ is closed.
\end{lemma}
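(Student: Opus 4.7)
The plan is to identify the orbit in question, after base change to an algebraic closure, with an orbit on the $\sigma$-symmetric variety of $G_0$, and then invoke Richardson's closed-orbit theorem from \cite{Ri}.

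Closedness of a subscheme of $S_k$ may be checked after extension of scalars to the algebraic closure $\bar k$, so I assume $k$ is algebraically closed. Over $\bar k$ the \'etale $F$-algebra $M$ becomes split (or is already $F$), so Weil restriction gives $G_{\bar k} \cong (G_0)_{\bar k} \times (G_0)_{\bar k}$ with $\tau$ swapping the two factors and $\sigma$ acting diagonally. Correspondingly $G^{\sigma}_{\bar k} \cong (G_0^{\sigma})_{\bar k} \times (G_0^{\sigma})_{\bar k}$ and $G^{\theta}_{\bar k} \cong \{(g, g^{\sigma}) : g \in G_0(\bar k)\}$. For $\gamma=(a,b)$ one computes $\gamma\gamma^{-\theta} = (ab^{-\sigma}, ba^{-\sigma}) = (c, c^{-\sigma})$ with $c := ab^{-\sigma}$, identifying $S_{\bar k}$ with $(G_0)_{\bar k}$ via the first projection.

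Under this identification, the twisted action $s \mapsto x_1 s x_1^{-\tau}$ of $G^{\sigma}_{\bar k}$ on $S_{\bar k}$ becomes the two-sided action $(x_1, x_2)\cdot c = x_1 c x_2^{-1}$ of $(G_0^{\sigma} \times G_0^{\sigma})_{\bar k}$ on $(G_0)_{\bar k}$. Since $ss^{\tau}=(cc^{-\sigma}, c^{-\sigma}c)$, whose two components are conjugate, the hypothesis that $\gamma$ is relatively $\tau$-semisimple is equivalent to $cc^{-\sigma}$ being semisimple in $G_0$. The map $B_{\sigma}\colon (G_0)_{\bar k} \to Q_{\bar k}$, $c \mapsto cc^{-\sigma}$, is the quotient by the right $(G_0^{\sigma})_{\bar k}$-action and is $(G_0^{\sigma})_{\bar k}$-equivariant for left translation, which on $Q_{\bar k}$ corresponds to conjugation. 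Hence the two-sided orbit $(G_0^{\sigma})_{\bar k} \cdot c \cdot (G_0^{\sigma})_{\bar k}$ equals $B_{\sigma}^{-1}\bigl((G_0^{\sigma})_{\bar k} \cdot cc^{-\sigma}\bigr)$, and its closedness is equivalent to closedness of the $(G_0^{\sigma})_{\bar k}$-conjugation orbit of the semisimple element $cc^{-\sigma}$ in $Q_{\bar k}$; this is exactly Richardson's theorem from \cite{Ri}.

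The only real content is the change of variables over $\bar k$; once the identifications are in hand, closedness follows from Richardson together with the standard fact that closed subschemes descend from $\bar k$ to $k$. The trivial extension case $M=F$ (with $\tau=\mathrm{id}$ and $\theta=\sigma$) reduces directly to Richardson without any product decomposition. The main obstacle to anticipate is purely bookkeeping: making the identifications precise and tracking the automorphisms $\tau$, $\sigma$, $\theta$ through the product decomposition; no deeper machinery is required beyond \cite{Ri}.
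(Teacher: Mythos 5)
Your proof is correct and follows essentially the same path as the paper: both pass to $\bar k$, decompose $G_{\bar k}\cong G_{0\bar k}\times G_{0\bar k}$ with $\tau$ swapping factors, realize the twisted $G^{\sigma}$-orbit of $\gamma\gamma^{-\theta}$ as the preimage of a semisimple $G_0^{\sigma}$-conjugacy class in $Q_{\bar k}$, and invoke Richardson's closed-orbit theorem. The only difference is notational: you first identify $S_{\bar k}\cong (G_0)_{\bar k}$ via the first projection and work with $B_{\sigma}$, whereas the paper keeps $S_{\bar k}$ inside $G_{0\bar k}\times G_{0\bar k}$ and uses the norm map $N(x_1,x_2)=x_1x_2$, which becomes $B_{\sigma}$ under your identification.
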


\begin{proof} If $\tau$ is trivial, this is Theorem 7.5 of \cite{Ri}, so we assume for the remainder of the proof that $\tau$ is nontrivial. Let $\overline{k}$ be an algebraic closure of $k$.
By faithfully flat descent along $k\to \overline{k}$, it is enough to prove that the orbit of $\gamma\gamma^{-\theta}$ under $G^{\sigma}_{\overline{k}}$ is closed. Choose an isomorphism $G_{\overline{k}}\cong G_{0\overline{k}}\times G_{0\overline{k}}$ intertwining $\tau$ with $(x, y)\longmapsto (y, x)$ and equivariant with respect to $\sigma$. Note that there is a morphism $S_{\overline{k}}\to Q_{\overline{k}}$ given at the level of $R$-points by
\begin{align*}
N : S_{\overline{k}}(R)&\longrightarrow Q_{\overline{k}}(R)\\
(x_1, x_2)&\longmapsto x_1x_2\,,
\end{align*}for $\overline{k}$-algebra $R$. The element $N(\gamma\gamma^{-\theta})\in Q(\overline{k})$ is semisimple by assumption, so by Theorem 7.5 of \cite{Ri} we conclude that its $G^{\sigma}_{0\overline{k}}$ conjugacy class is closed. The inverse image of the $G^{\sigma}_{0\overline{k}}$ conjugacy class of $N(\gamma\gamma^{-\theta})$ is the $G^{\sigma}_{\overline{k}}$ orbit of $\gamma\gamma^{-\theta}$ (under $\tau$-conjugation). The $G^{\sigma}_{\overline{k}}$ orbit of $\gamma\gamma^{-\theta}$ under $\tau$-conjugation is therefore closed since it is the inverse image of a closed set under a continuous map.
\end{proof}

%%Theorem of Steinberg-----------------------------------------------------------
\begin{theorem}\label{Stein}
If $\gamma\in G(k)$ is relatively $\tau$-semisimple, then $G^{\sigma\theta}_{\gamma}$ is reductive.
\end{theorem}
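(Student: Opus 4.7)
The plan is to identify $G^{\sigma\theta}_\gamma$ with a $\tau$-twisted centralizer in $G^\sigma$, and then to reduce the reductivity of that twisted centralizer to Steinberg's theorem on centralizers of semisimple elements after passing to an algebraic closure. Set $\delta:=\gamma\gamma^{-\theta}$, and consider the first-projection morphism $G^{\sigma\theta}_\gamma \to C^\tau_{G^\sigma,\delta}$, $(x,y)\mapsto x$. On points, the condition $x^{-1}\gamma y=\gamma$ forces $y=\gamma^{-1}x\gamma$; using $x^\theta=x^\tau$ (valid because $x\in G^\sigma$ and $\theta=\sigma\tau$) together with $\delta^{-1}=\gamma^\theta\gamma^{-1}$, the equation $y^\theta=y$ rearranges to exactly $x\delta x^{-\tau}=\delta$. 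Conversely, $x\mapsto (x,\gamma^{-1}x\gamma)$ is the inverse, and the same calculation verifies that $\gamma^{-1}x\gamma\in G^\theta$ whenever $x\in C^\tau_{G^\sigma,\delta}$. It therefore suffices to prove that $C^\tau_{G^\sigma,\delta}$ is reductive.

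By faithfully flat descent this may be checked after base change to an algebraic closure $\overline{k}$ of $k$. If $\tau$ is trivial, then $\delta\in Q(\overline{k})$ is semisimple by hypothesis, $G_{0\overline{k}}^\sigma$ is reductive by \cite[\S 9]{S2}, and the result is immediate from Steinberg. When $\tau$ is nontrivial I would follow the device used in the proof of Lemma \ref{closed}: choose an isomorphism $G_{\overline{k}}\cong G_{0\overline{k}}\times G_{0\overline{k}}$ intertwining $\tau$ with $(u,v)\mapsto(v,u)$ and equivariant with respect to $\sigma$, so that $G^\sigma_{\overline{k}}=G^\sigma_{0\overline{k}}\times G^\sigma_{0\overline{k}}$. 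A direct computation identifies $S_{\overline{k}}=\{(a,a^{-\sigma}):a\in G_{0\overline{k}}\}$, and the norm $N:S_{\overline{k}}\to Q_{\overline{k}}$, $(a,a^{-\sigma})\mapsto a\cdot a^{-\sigma}$, carries $\delta=(\delta_1,\delta_2)$ (with $\delta_2=\delta_1^{-\sigma}$) to $N(\delta)=\delta_1\delta_2$, which is semisimple by hypothesis.

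For $x=(x_1,x_2)\in G^\sigma_{\overline{k}}$, the relation $x\delta=\delta x^\tau$ amounts to $x_2=\delta_1^{-1}x_1\delta_1$ together with $x_1 N(\delta)=N(\delta)x_1$; moreover, the condition $x_2\in G^\sigma_{0\overline{k}}$ turns out to be equivalent, via $\delta_1^{-\sigma}=\delta_2$, to $x_1$ commuting with $\delta_1\delta_1^{-\sigma}=N(\delta)$, i.e.~to the very same condition already imposed. The first projection therefore realizes an isomorphism $C^\tau_{G^\sigma_{\overline{k}},\delta}\xrightarrow{\sim} C_{G_{0\overline{k}}^\sigma,N(\delta)}$, and Steinberg's theorem applied to the reductive group $G_{0\overline{k}}^\sigma$ and the semisimple element $N(\delta)$ finishes the argument. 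The main obstacle I anticipate is the ``miraculous collapse'' in this last step: that the two a priori independent conditions $x_2\in G_0^\sigma$ and $x_1\in C_{G_0}(N(\delta))$ reduce to one. This hinges on the description of $S_{\overline{k}}$ as the graph of $a\mapsto a^{-\sigma}$, so the hypothesis $\delta\in S$ enters essentially at exactly this point.
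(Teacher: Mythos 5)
Your first step---identifying $G^{\sigma\theta}_{\gamma}$ with $C^{\tau}_{G^{\sigma},\,\gamma\gamma^{-\theta}}$ via $(x,y)\mapsto x$---is exactly the paper's first step, and your computation is correct. From there the two arguments diverge. The paper does not untwist anything: it observes that $C^{\tau}_{G^{\sigma},\,\delta}$ (with $\delta=\gamma\gamma^{-\theta}$) is the stabilizer of $\delta\in S$ under the $\tau$-conjugation action of $G^{\sigma}$, invokes Lemma~\ref{closed} to get that this orbit is closed, and then applies \cite[2.1.10(b)]{BR}, which says that the stabilizer of a point with closed orbit under a reductive group is reductive. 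You instead pass to $\overline{k}$, split $G_{\overline{k}}\cong G_{0\overline{k}}\times G_{0\overline{k}}$, compute the ``collapse'' explicitly, and land on $C_{G^{\sigma}_{0\overline{k}}}(N(\delta))$. That collapse computation is correct: the condition $x_2\in G^{\sigma}_{0\overline{k}}$ is indeed equivalent to $x_1$ commuting with $N(\delta)$, because $S_{\overline{k}}$ is the graph of $a\mapsto a^{-\sigma}$, exactly as you anticipate.

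The gap is in the last sentence of each case. You appeal to ``Steinberg's theorem applied to the reductive group $G^{\sigma}_{0\overline{k}}$ and the semisimple element $N(\delta)$'' (and similarly to $\delta$ in the $\tau$-trivial case). But $N(\delta)$ is \emph{not} an element of $G^{\sigma}_{0\overline{k}}$. Since $N(\delta)\in Q(\overline{k})$, one has $N(\delta)^{\sigma}=N(\delta)^{-1}$, which is not $N(\delta)$ unless $N(\delta)^2=1$; nor does $N(\delta)$ normalize $G^{\sigma}_{0\overline{k}}$ in general. So the centralizer $C_{G^{\sigma}_{0\overline{k}}}(N(\delta))$ is the fixed locus in $G^{\sigma}_{0\overline{k}}$ of conjugation by an element that is external to the group, and the standard theorem ``the centralizer of a semisimple element of a reductive group is reductive'' does not apply as stated. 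This is precisely the non-trivial point that Richardson's theory of involutions is designed to handle, and it is the point that the paper's closed-orbit/\cite{BR} argument quietly dispatches. Your argument can be repaired: since $N(\delta)^{\sigma}=N(\delta)^{-1}$, the (reductive) centralizer $C_{G_{0\overline{k}}}(N(\delta))$ is $\sigma$-stable, hence $C_{G^{\sigma}_{0\overline{k}}}(N(\delta))=\bigl(C_{G_{0\overline{k}}}(N(\delta))\bigr)^{\sigma}$, and now one applies Steinberg's theorem on fixed points of a semisimple \emph{automorphism} of a reductive group, not on centralizers of semisimple \emph{elements}. With that extra step your route does go through, and it is arguably more explicit than the paper's; but as written the final invocation of Steinberg is a genuine leap.
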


\begin{proof}
First, we need to show that the morphism from $G^{\sigma\theta}_{\gamma}$ to $C^{\tau}_{G^{\sigma}, \gamma\gamma^{-\theta}}$ given on $R$ points by
{\allowdisplaybreaks\begin{align*}
G^{\sigma\theta}_{\gamma}(R)&\rightarrow C^{\tau}_{G^{\sigma}, \gamma\gamma^{-\theta}}(R)\\
(x, y)&\mapsto x
\end{align*}}is an isomorphism. It is clearly injective. For surjectivity, if $x^{-1}\gamma\gamma^{-\theta} x^{\tau}=\gamma\gamma^{-\theta}$ then $$\gamma^{-1}x^{-1}\gamma=\gamma^{-\theta}x^{-\tau}\gamma^{\theta}=(\gamma^{-1}x^{-\sigma}\gamma)^{\theta}=(\gamma^{-1}x^{-1}\gamma)^{\theta}$$so $(x, (\gamma^{-1}x^{-1}\gamma)^{-1})\in G^{\sigma\theta}_{\gamma}(R).$

Notice that $C^{\tau}_{G^{\sigma},\gamma\gamma^{-\theta}}$ is the stabilizer of $\gamma\gamma^{-\theta} \in S(k)$ under the action of $G^{\sigma}$ on $S$ by $\tau$-conjugation.  Since the $G^{\sigma}$-orbit of $\gamma\gamma^{-\theta}$ is closed, it follows that $C^{\tau}_{G^{\sigma},\gamma\gamma^{-\theta}}$ is reductive \cite[2.1.10 (b)]{BR}.
\end{proof}

%%%TRO integrals--------------------------------
\section{Twisted relative orbital integrals}

Let $v$ be a place of $F$ and let
\begin{align*}
\mu_v:=\mu_{1v} \times \mu_{2v}  :  G^{\sigma}(F_v)\times G^{\theta}(F_v) \to \mathbb{C}^{\times}
\end{align*}be a unitary character. Let $\gamma_v\in G(F_v)$ be a relatively $\tau$-semisimple element. Since $(G_{\gamma_v}^{\sigma\theta})^{\circ}$ is reductive, the closed subgroup
$$(G^{\sigma\theta}_{\gamma_v})^{\circ}(F_v)\leq G^{\sigma}(F_v)\times G^{\theta}(F_v) $$is unimodular. Thus we can and do choose Haar measures $dx_v$, $dy_v$, and $dt_{\gamma_v}$ on $G^{\sigma}(F_v)$, $G^{\theta}(F_v)$ and $(G^{\sigma\theta}_{\gamma_v})^{\circ}(F_v)$, respectively, and form the right invariant Radon measure $\frac{dx_vdy_v}{dt_{\gamma_v}}$.

%%Relevant element-v---------------------------------------------------------------
\begin{definition}
Let $\gamma_v\in G(F_v)$ be a relatively $\tau$-semisimple element. Then $\gamma_v$ is said to be a {\it relevant element } if $\mu_v$ is trivial on  $(G^{\sigma\theta}_{\gamma_v})^{\circ}(F_v)$.
\end{definition}

%%TRO_v----------------------------------------
For $f_v\in C_c^{\infty}(G(F_v))$ and for a relevant element $\gamma_v$, the twisted relative orbital integral is defined by
\begin{equation}\label{TRO_v}
\mathrm{TRO}^{\mu_v}_{\gamma_v}(f_v):=\iint_{(G^{\sigma\theta}_{\gamma_v})^{\circ}(F_v)\backslash G^{\sigma}(F_v)\times G^{\theta}(F_v)}\mu_v(x_v, y_v^{-1})f_v(x_v^{-1}\gamma_v y_v)\frac{dx_vdy_v}{dt_{\gamma_v}}\,.
\end{equation}
The orbital integral \eqref{TRO_v} is absolutely convergent:

%%TRO_v ----Absolute convergent-----------------------------------------------------------
\begin{proposition}\label{TRO-conv}
The twisted relative orbital integral defined in \eqref{TRO_v} is absolutely convergent.
\end{proposition}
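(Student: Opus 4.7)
The strategy is to show that the region of integration in \eqref{TRO_v} has relatively compact image in the quotient
\[
Z:=(G^{\sigma\theta}_{\gamma_v})^{\circ}(F_v)\backslash G^{\sigma}(F_v)\times G^{\theta}(F_v);
\]
since $\mu_v$ is unitary and $f_v$ is continuous with compact support, this reduces the claim to relative compactness of the set
\[
X:=\{(x_v,y_v)\in G^{\sigma}(F_v)\times G^{\theta}(F_v)\,:\,x_v^{-1}\gamma_v y_v\in\mathrm{supp}(f_v)\}
\]
in $Z$.

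To control $X$, I would first push it forward via the map $B_{\theta}$. Since $\sigma$ and $\tau$ commute, for $x_v\in G^{\sigma}(F_v)$ we have $x_v^{\theta}=x_v^{\tau}$, and for $y_v\in G^{\theta}(F_v)$ we have $y_v^{\theta}=y_v$, so a direct computation gives
\[
B_{\theta}(x_v^{-1}\gamma_v y_v)=x_v^{-1}\gamma_v\gamma_v^{-\theta}x_v^{\tau}.
\]
Hence for $(x_v,y_v)\in X$ the element $x_v^{-1}\gamma_v\gamma_v^{-\theta}x_v^{\tau}$ lies in the compact set $B_{\theta}(\mathrm{supp}(f_v))\subset S(F_v)$. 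By Lemma \ref{closed}, the $G^{\sigma}$-orbit of $\gamma_v\gamma_v^{-\theta}$ in $S$ under $\tau$-conjugation is Zariski closed; consequently its $F_v$-points form a closed subset of $S(F_v)$ in the analytic topology, and a standard result for transitive actions of $\sigma$-compact locally compact groups with closed orbits implies that the orbit map
\[
C^{\tau}_{G^{\sigma},\gamma_v\gamma_v^{-\theta}}(F_v)\backslash G^{\sigma}(F_v)\longrightarrow S(F_v),\qquad x_v\longmapsto x_v^{-1}\gamma_v\gamma_v^{-\theta}x_v^{\tau},
\]
is a homeomorphism onto its image, hence proper. Therefore $x_v$ is confined to a compact subset of $G^{\sigma}(F_v)$ modulo $C^{\tau}_{G^{\sigma},\gamma_v\gamma_v^{-\theta}}(F_v)$. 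Given such $x_v$, solving $y_v=\gamma_v^{-1}x_v\cdot(x_v^{-1}\gamma_v y_v)$ forces $y_v$ into a compact subset of $G^{\theta}(F_v)$ as well. Using the isomorphism $G^{\sigma\theta}_{\gamma_v}\cong C^{\tau}_{G^{\sigma},\gamma_v\gamma_v^{-\theta}}$ established in the proof of Theorem \ref{Stein}, together with the fact that $(G^{\sigma\theta}_{\gamma_v})^{\circ}(F_v)$ has finite index in $G^{\sigma\theta}_{\gamma_v}(F_v)$, the image of $X$ in $Z$ is relatively compact, and absolute convergence follows.

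The main technical obstacle is the passage from Zariski closedness in Lemma \ref{closed} to closedness and properness of the orbit map on $F_v$-points in the analytic topology. This is standard but requires some care, particularly at archimedean places where one must take into account the connected components of $F_v$-points; it ultimately rests on general facts about orbits of algebraic group actions over local fields and on the closed-orbit lemma for continuous actions of locally compact groups.
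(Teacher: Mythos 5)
Your argument is correct and is essentially the same as the paper's: both pass through $B_{\theta}$ to reduce convergence to a statement about the closed $G^{\sigma}(F_v)$-orbit of $\gamma_v\gamma_v^{-\theta}$ in $S(F_v)$ (Lemma \ref{closed}), and then use that closedness forces the relevant support to be compact in the quotient. The only difference is packaging: the paper integrates out the $G^{\theta}$-variable first via the Rader--Rallis surjection $f_v\mapsto\psi_{f_v}\in C_c^{\infty}(S(F_v))$, reducing at once to an integral over $(C^{\tau}_{G^{\sigma},\gamma_v\gamma_v^{-\theta}})^{\circ}(F_v)\backslash G^{\sigma}(F_v)$, whereas you bound the set $X$ directly in $x_v$ and then in $y_v$ --- the properness of the orbit map over $F_v$ that you carefully flag is exactly the point the paper leaves implicit in ``it follows that the support \dots is compact.''
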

\begin{proof}
Recall that $|\mu_v|=1$ so it plays no role in absolute convergence. There is a surjective map \cite[\S 3]{RR}
\begin{align*}
C_c^{\infty}(G(F_v))&\longrightarrow C_c^{\infty}(S(F_v))\\
f_v&\longmapsto \psi_{f_v},
\end{align*}where $$\psi_{f_v}(g_vg_v^{-\theta}):=\int_{G^{\theta}(F_v)}f_v(g_vy_v)dy_v, \quad g_v\in G(F_v).$$Note that it is not necessarily true that all of the elements of $S(F_v)$ are of the form $g_vg_v^{-\theta}$ for $ g_v\in G(F_v)$. We have that
\begin{align}
\iint_{(G^{\sigma\theta}_{\gamma_v})^{\circ}(F_v)\backslash G^{\sigma}(F_v)\times G^{\theta}(F_v)}&|f_v(x_v^{-1}\gamma_v y_v)|\frac{dx_vdy_v}{dt_{\gamma_v}}\label{f-psi}\\
=&\int_{(C^{\tau}_{G^{\sigma}, \ga_v\ga_v^{-\theta}})^{\circ}(F_v)\backslash G^{\sigma}(F_v)}|\psi_{f_v}(g_v(\ga_v\ga_v^{-\theta})g_v^{-\tau})|dg_v\nonumber\\
=&\int_{(C^{\tau}_{G^{\sigma}, \ga_v\ga_v^{-\theta}})^{\circ}(F_v)\backslash G^{\sigma}(F_v)}|\psi_{f_v}(g_v\ga_v(g_v\ga_v)^{-\theta})|dg_v.\nonumber
\end{align}
The $G^{\sigma}(F_v)$-orbit of $\ga_v\ga_v^{-\theta}$ in $S(F_v)$ is closed by Lemma \ref{closed}. It follows that the support of the function
\begin{align*}
(C^{\tau}_{G^{\sigma}, \ga_v\ga_v^{-\theta}})^{\circ}(F_v)\backslash G^{\sigma}(F_v)&\longrightarrow \mathbb{C}\\
g_v&\longmapsto \psi_{f_v}(g_v(\ga_v\ga_v^{-\theta})g_v^{-\tau})
\end{align*}
is compact, so the convergence of the right hand side of \eqref{f-psi} is immediate.
\end{proof}

%%TRO-Character--------------------------------------------------------------------------------
Let
{\allowdisplaybreaks\begin{align*}
\mu_1: & \,\,G^{\sigma}(\mathbb{A}_F)\longrightarrow \mathbb{C}^{\times},\\
\mu_2: & \,\,G^{\theta}(\mathbb{A}_F) \longrightarrow \mathbb{C}^{\times}
\end{align*}be unitary characters trivial on $G^{\sigma}(F)$ and $G^{\theta}(F)$, respectively, and set
$$\mu:= \mu_1\times\mu_2:G^{\sigma}(\mathbb{A}_F)\times G^{\theta}(\mathbb{A}_F)\longrightarrow \mathbb{C}^{\times}.$$
%%TRO relevant------------------------------------
\begin{definition}
A relatively  $\tau$-semisimple element $\gamma \in G(\mathbb{A}_F)$ is said to be a {\it relevant element } if  $\mu$ is trivial on  $(G^{\sigma\theta}_{\gamma})^{\circ}(\mathbb{A}_F)$.
\end{definition}
%%%%%%%%%%%%TRO definition-----------------------------------
For $f\in C_c^{\infty}(\mathbb{A}_F)$ and for a relevant element $\ga$, we define
\begin{equation}\label{TRO}
\mathrm{TRO}^{\mu}_{\gamma}(f):=\iint_{(G^{\sigma\theta}_{\gamma})^{\circ}(\mathbb{A}_F)\backslash G^{\sigma}(\mathbb{A}_F)\times G^{\theta}(\mathbb{A}_F)}\mu(x,y^{-1})f(x^{-1}\gamma y)\frac{dxdy}{dt_{\gamma}}\,,
\end{equation}where $dx$, $dy$, and $dt_{\gamma}$ are Haar measures on $G^{\sigma}(\mathbb{A}_F)$, $G^{\theta}(\mathbb{A}_F)$ and $(G^{\sigma\theta}_{\gamma})^{\circ}(\mathbb{A}_F)$, respectively.  When $G^{\sigma}$ is connected and $\gamma$ is strongly relatively $\tau$-regular,   Proposition \ref{TRO-conv} and Proposition \ref{one} below imply that \eqref{TRO} is absolutely convergent.

%%Proposition-one-----------------------------------------------------------------
\begin{proposition}\label{one}
Suppose that $G^{\sigma}$ is connected and $\gamma \in G(F)$ is a relevant strongly relatively $\tau$-regular element. Then, for almost all finite places $v$, we have that $G^{\sigma}$, $G^{\theta}$, and $G$ are unramified at $v$ and we have that
\begin{equation}\label{one_v}
\mathrm{TRO}^{\mu_v}_{\gamma_v}(\mathrm{ch}_{K_v})=\iint_{G_{\gamma_v}^{\sigma\theta}(F_v)\cap (K^{\sigma}_v\times K^{\theta}_v)\backslash K^{\sigma}_v\times K^{\theta}_v}\frac{dx_vdy_v}{dt_{\gamma_v}}=1,
\end{equation}where $K_v\leq G(F_v)$ is a hyperspecial subgroup such that
$$
K^{\sigma}_v:=K_v\cap G^{\sigma}(F_v),\quad
K^{\theta}_v:=K_v\cap G^{\theta}(F_v),
$$and
$$G_{\gamma_v}^{\sigma\theta}(F_v)\cap (K^{\sigma}_v\times K^{\theta}_v)\leq G^{\sigma\theta}_{\gamma_v}(F_v)$$are all hyperspecial. Here $dx_v$, $dy_v$, and $dt_{\gamma_v}$ are normalized to give $K^{\sigma}_v$, $K^{\theta}_v$ and $G_{\gamma_v}^{\sigma\theta}(F_v)\cap (K^{\sigma}_v\times K^{\theta}_v)$ volume $1$, respectively. As usual, $\mathrm{ch}_{K_v}$ is the characteristic function of $K_v$.
\end{proposition}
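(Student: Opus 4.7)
The plan is first to spread everything out to integral models over a ring $\mathcal{O}_S$ of $S$-integers for a suitably large finite set $S$ of places containing the archimedean ones, and then to reduce the proposition to a local surjectivity statement coming from smoothness of the orbit map together with Lang's theorem.

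Enlarge $S$ so that, for $v \notin S$: (i) $G, G^\sigma, G^\theta$ extend to smooth affine group schemes $\mathcal{G}, \mathcal{G}^\sigma, \mathcal{G}^\theta$ over $\mathcal{O}_S$ with connected reductive fibers, so $K_v = \mathcal{G}(\mathcal{O}_v)$, $K_v^\sigma = \mathcal{G}^\sigma(\mathcal{O}_v)$, $K_v^\theta = \mathcal{G}^\theta(\mathcal{O}_v)$ are hyperspecial; (ii) $\gamma \in \mathcal{G}(\mathcal{O}_S)$, so $\gamma_v \in K_v$; (iii) $\mu_v$ is unramified; (iv) by the isomorphism established in the proof of Theorem \ref{Stein}, $G^{\sigma\theta}_\gamma \cong C^\tau_{G^\sigma,\,\gamma\gamma^{-\theta}}$ is a connected $F$-torus (by strong relative $\tau$-regularity), and it extends to a smooth $\mathcal{O}_S$-torus $\mathcal{T}$, so $H := G^{\sigma\theta}_{\gamma_v}(F_v) \cap (K_v^\sigma \times K_v^\theta) = \mathcal{T}(\mathcal{O}_v)$ is hyperspecial in $G^{\sigma\theta}_{\gamma_v}(F_v)$.

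The heart of the argument is the identification
$$\{(x,y) \in G^\sigma(F_v) \times G^\theta(F_v) : x^{-1}\gamma_v y \in K_v\} = G^{\sigma\theta}_{\gamma_v}(F_v) \cdot (K_v^\sigma \times K_v^\theta).$$
The inclusion $\supseteq$ is immediate. For the converse, given $(x,y)$ with $\gamma' := x^{-1}\gamma_v y \in K_v$, it suffices to find $(k_1,k_2) \in K_v^\sigma \times K_v^\theta$ with $k_1^{-1}\gamma_v k_2 = \gamma'$, since then $(xk_1^{-1},\,yk_2^{-1}) \in G^{\sigma\theta}_{\gamma_v}(F_v)$. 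To produce such a pair, consider the orbit morphism
$$\pi\colon \mathcal{G}^\sigma \times \mathcal{G}^\theta \longrightarrow \mathcal{G}, \qquad (u,w) \longmapsto u^{-1}\gamma w.$$
Enlarging $S$ once more (using that strong relative $\tau$-regularity is an open condition), the scheme-theoretic image $\overline{\mathcal{O}}$ is a closed smooth $\mathcal{O}_S$-subscheme of $\mathcal{G}$ and $\pi$ is an fppf $\mathcal{T}$-torsor onto $\overline{\mathcal{O}}$. For $v \notin S$, the fiber $\pi^{-1}(\gamma')$ is then a smooth $\mathcal{O}_v$-scheme; since $\mathcal{O}_v$ is Henselian, it has an $\mathcal{O}_v$-point iff its special fiber has a $\kappa_v$-point, and the latter follows from Lang's theorem applied to the connected torus $\mathcal{T}_{\kappa_v}$. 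Any such $\mathcal{O}_v$-lift provides the desired $(k_1,k_2)$.

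Granted this support identification, the integrand of \eqref{TRO_v} is supported on the image of $K_v^\sigma \times K_v^\theta$ in $G^{\sigma\theta}_{\gamma_v}(F_v) \backslash G^\sigma(F_v) \times G^\theta(F_v)$, and $\mu_v$ is trivial on $K_v^\sigma \times K_v^\theta$ by unramifiedness (relevance ensures that the integrand is well-defined on the quotient in the first place). The orbital integral therefore collapses to the volume of $H \backslash K_v^\sigma \times K_v^\theta$ with respect to the quotient measure $dx_v\, dy_v / dt_{\gamma_v}$, which equals $\mathrm{vol}(K_v^\sigma)\,\mathrm{vol}(K_v^\theta)/\mathrm{vol}(H) = 1$ under the stated normalization. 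The main obstacle is the smoothness-plus-Lang step, which requires the integral orbit $\overline{\mathcal{O}}$ to behave well over $\mathcal{O}_v$ for almost all $v$; once this is arranged, the remainder is routine bookkeeping.
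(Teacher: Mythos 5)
Your proposal is correct and follows essentially the same route as the paper's proof: spread out $G$, $G^\sigma$, $G^\theta$, the orbit, and the stabilizer $G^{\sigma\theta}_\gamma$ to smooth integral models over $\mathcal{O}_F[N^{-1}]$, observe that the orbit map is smooth with connected (strongly relatively $\tau$-regular) stabilizer, apply Lang--Steinberg to the special fiber of $\pi^{-1}(\gamma')$ to produce a $\kappa_v$-point, lift it by smoothness over the henselian ring $\mathcal{O}_v$, and then collapse the orbital integral to a ratio of volumes after noting $\mu_v$ is unramified. The paper packages the Lang--Steinberg step as producing an $\mathbb{F}_v$-point of a nonempty homogeneous space under the connected group $\mathcal{G}^{\sigma\theta}_{\gamma\mathbb{F}_v}$ (citing Steinberg) and lifts via Bosch--L\"utkebohmert--Raynaud, whereas you phrase the fiber as a torsor under the integral torus $\mathcal{T}$; these are the same argument. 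The one point both treatments leave implicit is that $\gamma'=x^{-1}\gamma_v y \in K_v$ actually defines an $\mathcal{O}_v$-point of the integral orbit $\overline{\mathcal{O}}$ (not merely of $\mathcal{G}$), which holds because $\overline{\mathcal{O}}$ is the flat scheme-theoretic closure of the generic-fiber orbit inside $\mathcal{G}$ and $\gamma'$ lies in the orbit over $F_v$.
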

%%Proof of Proposition-one---------------------------------------------
\begin{proof}
Let $\gamma\in G(F)$ be strongly relatively $\tau$-regular and let $O(\gamma)$ denote its geometric relative $\tau$-class. We view $O(\gamma)$ as an affine $F$-subscheme of $G$. Note that the categorical quotient $G_{\gamma}^{\sigma\theta}\backslash (G^{\sigma}\times G^{\theta})$ exists as an affine scheme of finite type over $F$ (see \cite[Theorem 1.1 \S 1.2]{GIT}). There is a natural isomorphism
$$G_{\gamma}^{\sigma\theta}\backslash (G^{\sigma}\times G^{\theta})\cong O(\gamma)$$induced by the morphism $G^{\sigma}\times G^{\theta}\longrightarrow G$ given on $F$-algebras $R$ by
{\allowdisplaybreaks\begin{align}
G^{\sigma}(R)\times G^{\theta}(R)&\longrightarrow G(R)\label{useful-one}\\
(x, y)&\longmapsto x\gamma y^{-1}.\nonumber
\end{align}}Choose an embedding $G\longrightarrow \text{GL}_n(F)$ for some integer $n$ and let $\mathcal{G}$ be the scheme theoretic closure of $G$ in $\text{GL}_n(\mathcal{O}_F)$. Moreover, let
$$
\mathcal{O}(\gamma),\, \mathcal{G}^{\sigma},\, \mathcal{G}^{\theta}\subset \mathcal{G}\quad\text{and}\quad\mathcal{G}^{\sigma\theta}_{\gamma}\subset\mathcal{G}^{\sigma}\times\mathcal{G}^{\theta}
$$
be the scheme theoretic closures of $O(\gamma),\, G^{\sigma},\, G^{\theta}$ and $G^{\sigma\theta}_{\gamma}$, respectively. Then, for almost all finite places $v$ of $F$, the group schemes $\mathcal{G}^{\sigma},\, \mathcal{G}^{\theta},\,\mathcal{G}$ and $\mathcal{G}^{\sigma\theta}_{\gamma}$ are reductive and their $\mathcal{O}_{F_v}$-points are hyperspecial subgroups of their $F_v$-points \cite[\S 3.9]{T}. Here we are using the fact that $\ga$ is strongly relatively $\tau$-regular to conclude that $G^{\sigma\theta}_{\gamma}$ is connected.

Moreover, there is an integer $N$ such that the natural morphism
$$(\mathcal{G}^{\sigma}\times \mathcal{G}^{\theta})_{\mathcal{O}_F [N^{-1}]}\longrightarrow \mathcal{O}(\gamma)_{\mathcal{O}_F[N^{-1}]}$$is a smooth morphism between smooth schemes whose generic fiber is induced by \eqref{useful-one}.

Now let $v$ be a finite place of $F$ whose residual characteristic is prime to $N$. By enlarging $N$ if necessary, we can and do assume that $\gamma\in \mathcal{G}(\mathcal{O}_{F_v})$. We claim that the map
$$
\mathcal{G}^{\sigma}(\mathcal{O}_{F_v})\times \mathcal{G}^{\theta}(\mathcal{O}_{F_v})\longrightarrow \mathcal{O}(\gamma)(\mathcal{O}_{F_v})
$$
is surjective. Indeed, let $\gamma'\in \mathcal{O}(\gamma)(\mathcal{O}_{F_v}),$ let $\mathbb{F}_v$ be the residue field of $\mathcal{O}_{F_v},$ and let $\overline{\gamma}'\in \mathcal{O}(\gamma)(\mathbb{F}_v)$ be the image of $\gamma'$ under the reduction map. Consider the set of $(\overline{x}, \overline{y})\in \mathcal{G}^{\sigma}(\mathbb{F}_v)\times\mathcal{G}^{\theta}(\mathbb{F}_v)$ such that $\overline{x}\overline{\gamma}'\overline{y}^{-1}=\overline{\gamma}$. This is a (nonempty) $\mathcal{G}^{\sigma\theta}_{\gamma \mathbb{F}_v}$-homogeneous space. Since $\ga$ is strongly relatively $\tau$-regular, $\mathcal{G}^{\sigma\theta}_{\gamma \mathbb{F}_v}$ is connected and hence has an $\mathbb{F}_v$-point \cite[Theorem 1.9]{S1}. In other words, there is a $(x, y)\in \mathcal{G}^{\sigma}(\mathbb{F}_v)\times\mathcal{G}^{\theta}(\mathbb{F}_v)$ such that $x\overline{\gamma}'y^{-1}=\overline{\gamma}$. Since $\mathcal{G}^{\sigma}\times \mathcal{G}^{\theta}$ is smooth, this implies that there is a lift of $(x, y)$ to an element of $\mathcal{G}^{\sigma}(\mathcal{O}_{F_v})\times \mathcal{G}^{\theta}(\mathcal{O}_{F_v})$ \cite[Proposition 5]{BLR}.

Summing up, if the residual characteristic of a finite place $v$ of $F$ is large enough and $\gamma$ is an element of the hyperspecial subgroup $K:=\mathcal{G}(\mathcal{O}_{F_v})$, then any $\gamma'\in K$ in the geometric relative $\tau$-class  of $\gamma$ in fact satisfies $\gamma'\in K^{\sigma}\gamma K^{\theta}$, where $K^{\sigma}:=\mathcal{G}^{\sigma}(\mathcal{O}_{F_v})$ and $K^{\theta}:=\mathcal{G}^{\theta}(\mathcal{O}_{F_v})$.

Thus, for almost all $v$ we have that $\mathrm{TRO}_{\gamma_v}^{\mu_v}(\mathrm{ch}_{K_v})$ is equal to
\begin{equation}\label{useful-two}
\iint_{G_{\gamma_v}^{\sigma\theta}(F_v)\cap (K^{\sigma}_v\times K^{\theta}_v)\backslash (K^{\sigma}_v\times K^{\theta}_v)}\mu_v(x_v, y^{-1}_v)\frac{dx_vdy_v}{dt_{\gamma_v}}.
\end{equation}The character $\mu_v$ is trivial on $K_v$ for almost all $v$, which completes the proof.
\end{proof}

%%Geometric side and elliptic kernel----------------------------------------------------------------------------
\section{Geometric expansion for the elliptic kernel}

In this section, we provide a geometric expansion of the relatively $\tau$-elliptic portions of the twisted relative trace formula. Throughout this section and for the rest of the paper, we assume that $G^{\sigma}$ is connected.

%%Definition of A
Recall the central subgroup $A_G$ and the Harish-Chandra subgroup ${}^1G(\mathbb{A}_F)$ defined in \eqref{ker}. To ease notation, set
\begin{equation}\label{2Gsigma-theta}
{}^2G^{\sigma}(\mathbb{A}_F):=G^{\sigma}(\mathbb{A}_F)\cap{}^1G(\mathbb{A}_F),\quad
{}^2G^{\theta}(\mathbb{A}_F):=G^{\theta}(\mathbb{A}_F)\cap{}^1G(\mathbb{A}_F)
\end{equation}and
\begin{equation}
A^{\sigma}_G:=A_G\cap G^{\sigma}(F\otimes_{\mathbb{Q}}\mathbb{R}),\quad
A^{\theta}_G:=A_G\cap G^{\theta}(F\otimes_{\mathbb{Q}}\mathbb{R}).
\end{equation}
Define
$$A:=\{ (z, z)\in A^{\sigma}_{G}\times A^{\theta}_{G} : z\in A^{\sigma}_{G}\cap A^{\theta}_{G} \}.$$

%%Decomposition
For every $\gamma\in G(F)$, define
\begin{equation}\label{2G}
{}^2G^{\sigma\theta}_{\gamma}(\mathbb{A}_F):=(G^{\sigma\theta}_{\gamma})^{\circ}(\mathbb{A}_F)\cap({}^2G^{\sigma}(\mathbb{A}_F)\times {}^2G^{\theta}(\mathbb{A}_F)).
\end{equation}Then we decompose
%%%%%decompose
\begin{equation}\label{decom}
(G^{\sigma\theta}_{\gamma})^{\circ}(\mathbb{A}_F)\backslash (G^{\sigma}\times G^{\theta})(\mathbb{A}_F)\cong (A\backslash A^{\sigma}_{G}\times A^{\theta}_{G})\times ({}^2G^{\sigma\theta}_{\gamma}(\mathbb{A}_F)\backslash {}^2G^{\sigma}(\mathbb{A}_F)\times {}^2G^{\theta}(\mathbb{A}_F)).
\end{equation}
%%Measures
We now specify a choice of Haar measure on all of the groups appearing in \eqref{decom}, at least if $\ga$ is relatively $\tau$-elliptic. Whenever a reductive $F$-group $H$ appears, we give $H(\mathbb{A}_F)$, $A_H$, ${}^1H(\mathbb{A}_F)$ the measures that are used in the definition of the  Tamagawa number $\tau(H)=[H :H^{\circ}]\tau(H^{\circ})$. We then fix, once and for all, measures $dz_{\sigma}$ for $A^{\sigma}_{G}$, $dz_{\theta}$ for $A^{\theta}_{G}$, and stipulate that the isomorphisms
\begin{align*}
{}^2G^{\sigma}(\mathbb{A}_F)&\cong A_{G^{\sigma}}/A^{\sigma}_G\times{}^1G^{\sigma}(\mathbb{A}_F)\\
{}^2G^{\theta}(\mathbb{A}_F)&\cong A_{G^{\theta}}/A^{\theta}_G\times{}^1G^{\theta}(\mathbb{A}_F)
\end{align*}are measure preserving. Notice that if $\ga$ is relatively $\tau$-elliptic then
$${}^2G^{\sigma\theta}_{\gamma}(\mathbb{A}_F)={}^1(G^{\sigma\theta}_{\gamma})^{\circ}(\mathbb{A}_F).$$By our earlier convention, we have already endowed ${}^2G^{\sigma\theta}_{\gamma}(\mathbb{A}_F)$ and $A=A_{G^{\sigma\theta}_{\gamma}}$ with Haar measures. Let $dt$ be this Haar measure for $A$.  Altogether, this endows all of the groups occurring in \eqref{decom} with measures. Thus ${}^2G^{\sigma\theta}_{\gamma}(\mathbb{A}_F)$ is given the unique Haar measure such that
$$\text{vol}((G^{\sigma\theta}_{\gamma})^{\circ}(F)\backslash {}^2G^{\sigma\theta}_{\gamma}(\mathbb{A}_F))=\tau((G^{\sigma\theta}_{\gamma})^{\circ}).$$For the rest of this paper, when we form twisted relative orbital integrals we use these choices of measures.

%%%Definition of f^1
For $f$ in $C_c^{\infty}(G(\mathbb{A}_F))$, define
\begin{equation}\label{f1}
f^1(x)=\int_{A\backslash (A^{\sigma}_{G}\times A^{\theta}_{G})}f(z_{\sigma}^{-1}z_{\theta}x)\frac{dz_{\sigma}dz_{\theta}}{dt}\,.
\end{equation}
Note that $f^1\in C_c^{\infty}({}^1G(\A_F))$. Then the elliptic kernel function is given by
\begin{equation}\label{kernel-geometric}
K_{f^1}^e(x,y)=\sum_{\gamma\in G(F)}f^1(x^{-1}\gamma y),
\end{equation}where the sum is over relatively $\tau$-regular elliptic $\gamma\in G(F)$. We can now state the geometric expansion of the relatively $\tau$-elliptic portion of the twisted relative trace formula.

%%Theorem_Geometric_expansion------------------------------------------------
\begin{theorem}\label{geometric}
Suppose that $G^{\sigma}$ is connected. Let $f=\otimes f_v\in  C_c^{\infty}(G(\mathbb{A}_F))$ and define $f^1$ as in \eqref{f1}. Suppose that there exist places $v_1, v_2$ of $F$ (not necessarily distinct) such  that $f_{v_1}$ is supported on relatively $\tau$-elliptic elements and $f_{v_2}$ is supported on strongly relatively $\tau$-regular elements. Then we have that
\begin{align*}
\sum_{\{\gamma\}}c^{\tau}(\gamma) \mathrm{TRO}^{\mu}_{\gamma}(f)
=\iint_{G^{\sigma}(F)\times G^{\theta}(F)\backslash {}^2G^{\sigma}(\mathbb{A}_F)\times {}^2G^{\theta}(\mathbb{A}_F)}\mu(x,y^{-1})K_{f^1}^e(x,y)dxdy,
\end{align*}where the sum is over the relevant strongly relatively $\tau$-regular elliptic   classes in $G(F)$. Here 
\begin{equation}\label{const}
c^{\tau}(\gamma):=\mathrm{vol}(G^{\sigma\theta}_{\gamma}(F)\backslash {}^2G^{\sigma\theta}_{\gamma}(\mathbb{A}_F)),
\end{equation}
and ${}^2G^{\sigma}(\mathbb{A}_F)$, ${}^2G^{\theta}(\mathbb{A}_F)$ and ${}^2G^{\sigma\theta}_{\gamma}(\mathbb{A}_F)$ are defined as in \eqref{2Gsigma-theta} and \eqref{2G}.
\end{theorem}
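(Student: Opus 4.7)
The plan is to unfold the right-hand side of the identity, starting from the definition of the elliptic kernel and exchanging the sum over $\gamma \in G(F)$ with the integral against $\mu$, then to recognize each resulting term as $c^{\tau}(\gamma)\mathrm{TRO}^{\mu}_{\gamma}(f)$.

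First I would substitute $K_{f^1}^e(x,y) = \sum_{\gamma} f^1(x^{-1}\gamma y)$ into the right-hand side and regroup the sum over relatively $\tau$-regular elliptic $\gamma \in G(F)$ by relative $\tau$-classes, so the kernel becomes $\sum_{\{\gamma\}}\sum_{\delta \in G^\sigma(F)\gamma G^\theta(F)} f^1(x^{-1}\delta y)$. The support hypotheses at $v_1$ and $v_2$ immediately restrict the outer sum to relatively $\tau$-elliptic and strongly relatively $\tau$-regular classes, and non-relevant classes will be seen to drop out once the integral is unfolded. The main obstacle is showing that the set of classes $\{\gamma\}$ making an actual nonzero contribution is finite, so that the interchange of sum and integral that comes next is legitimate. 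As signaled in the introduction, my approach would avoid reduction theory and instead argue as follows: the support of $f_{v_1}$ enforces that the invariant $\gamma\gamma^{-\theta}$ lies in a compact subset of the closed $\tau$-elliptic locus of $S(F_{v_1})$, while the support of $f_{v_2}$ together with Theorem \ref{Stein} and the strong $\tau$-regularity hypothesis guarantee connected centralizers; one can then parametrize the rational classes inside a fixed geometric class by a Galois cohomology set that is finite because $(G^{\sigma\theta}_\gamma)^\circ$ is an anisotropic torus modulo the center, and combine this with an adelic compactness argument to bound the number of geometric classes that contribute.

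Granting finiteness, I interchange the sum over $\{\gamma\}$ with the double integral. For a fixed representative $\gamma$, since $\gamma$ is strongly relatively $\tau$-regular, Theorem \ref{Stein} identifies $G^{\sigma\theta}_\gamma$ with the connected torus $C^{\tau}_{G^\sigma,\gamma\gamma^{-\theta}}$, so $G^{\sigma\theta}_\gamma$ is connected and equals the stabilizer in $G^\sigma(F)\times G^\theta(F)$ of $\gamma$ under the action \eqref{action_G}. Unfolding the orbit sum against the integral over $(G^\sigma \times G^\theta)(F)\backslash({}^2G^\sigma\times{}^2G^\theta)(\mathbb{A}_F)$ collapses it to an integral over $G^{\sigma\theta}_\gamma(F)\backslash({}^2G^\sigma\times{}^2G^\theta)(\mathbb{A}_F)$ of $\mu(x,y^{-1})f^1(x^{-1}\gamma y)$.

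Next I factor this quotient as
\[
G^{\sigma\theta}_\gamma(F)\backslash{}^2G^{\sigma\theta}_\gamma(\mathbb{A}_F) \;\times\; {}^2G^{\sigma\theta}_\gamma(\mathbb{A}_F)\backslash({}^2G^\sigma\times{}^2G^\theta)(\mathbb{A}_F),
\]
where the first factor contributes the volume $c^{\tau}(\gamma)$ of \eqref{const}. Note that relevance of $\gamma$ is exactly what is needed for $\mu$ to descend to the quotient and for the inner integration to produce a volume rather than an oscillatory zero contribution; non-relevant classes therefore contribute zero, matching the restriction in the left-hand sum.

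Finally, I compare the remaining integral with $\mathrm{TRO}^{\mu}_\gamma(f)$. Using the measure decomposition \eqref{decom} and the identity ${}^2G^{\sigma\theta}_\gamma(\mathbb{A}_F) = {}^1(G^{\sigma\theta}_\gamma)^\circ(\mathbb{A}_F)$ valid for relatively $\tau$-elliptic $\gamma$, the integral against $f^1$ over ${}^2G^{\sigma\theta}_\gamma(\mathbb{A}_F)\backslash({}^2G^\sigma\times{}^2G^\theta)(\mathbb{A}_F)$ equals, by the definition \eqref{f1} of $f^1$ and a Fubini argument absorbing the $A\backslash(A^\sigma_G\times A^\theta_G)$ factor, the integral against $f$ over $(G^{\sigma\theta}_\gamma)^\circ(\mathbb{A}_F)\backslash(G^\sigma\times G^\theta)(\mathbb{A}_F)$, which is $\mathrm{TRO}^{\mu}_\gamma(f)$ by \eqref{TRO}. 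This produces the desired equality term by term, with the only substantive technical input being the finiteness step indicated above.
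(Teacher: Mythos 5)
Your overall plan mirrors the paper's in reverse (unfolding the right-hand side rather than the left), and the measure-theoretic bookkeeping — the decomposition \eqref{decom}, the role of \eqref{f1}, the identification of the stabilizer via Theorem \ref{Stein}, the treatment of relevance and the vanishing of irrelevant contributions, and the extraction of the volume $c^{\tau}(\gamma)$ — is all in line with the paper's argument. The direction of unfolding is cosmetic; the content is the same.

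However, there is a genuine gap in the finiteness step, which is exactly the nontrivial input (and the one the paper isolates as Proposition \ref{finite-elliptic}). You assert that the rational relative classes inside a fixed geometric class are parametrized by a Galois cohomology set that is ``finite because $(G^{\sigma\theta}_\gamma)^\circ$ is an anisotropic torus modulo the center.'' That claim is false over a number field: $H^1(F,T)$ for an anisotropic $F$-torus $T$ is in general infinite (e.g.~the norm-one torus of a quadratic extension $K/F$ has $H^1(F,T)\cong F^\times/N_{K/F}K^\times$, which is infinite). So finiteness of the Galois cohomology set cannot be deduced from anisotropy alone; one has to exploit the fact that only classes meeting a fixed compact set $C$ can contribute. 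The paper's argument does exactly this. It first shows finitely many geometric classes meet $C$ (discreteness of $(G^{\sigma}\backslash S)(F)$ in $(G^{\sigma}\backslash S)(\mathbb{A}_F)$ under $P\circ B_{\theta}$). It then fixes a geometric class and shows only finitely many elements of $\Gamma_r(\mathbb{A}_F)$ within it meet $C$: at almost all finite places the unramified-model analysis from the proof of Proposition \ref{one} pins down a \emph{single} local orbit inside $C_v$, and at the remaining finitely many places one uses finiteness of the local Galois cohomology sets. Finally, the rational classes inside a given adelic class inject into $\mathfrak{E}(G^{\sigma\theta}_{\gamma}, G^{\sigma}\times G^{\theta}; \mathbb{A}_F/F)$, which is finite by Labesse's Proposition 1.8.4; this is the correct global finiteness statement, and it is not the same as finiteness of $H^1(F, (G^{\sigma\theta}_\gamma)^\circ)$. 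The local unramified control at almost all places — which forces triviality of the contribution outside a finite set of places — is essential and is entirely absent from your sketch. Without it the interchange of sum and integral, which everything else hinges on, is not justified.
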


\begin{rem}
 In the theorem, we allow $v_1=v_2$. It follows from the proof of Theorem \ref{geometric} that the sum of the left is finite, and hence absolutely convergent, and the integral on
the right is absolutely integrable.
\end{rem}

\noindent In order to prove Theorem \ref{geometric}, we require the following result:

%%Proposition-finite elliptic class------------------------------------------------------
\begin{proposition}\label{finite-elliptic}
Suppose that $G^{\sigma}$ is connected.
Let $C\subset G(\mathbb{A}_F)$ be a compact subset such that $C_v$ is a hyperspecial subgroup of $G(F_v)$ for almost all finite places $v$ of $F$.
There are only finitely many $G^{\sigma}(F)\gamma G^{\theta}(F)\in \Gamma_r(F)$ with $\gamma$ strongly relatively $\tau$-regular such that
$$
G^{\sigma}(\mathbb{A}_F)\gamma G^{\theta}(\mathbb{A}_F)\cap C\neq \emptyset.
$$
\end{proposition}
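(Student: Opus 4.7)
My plan is to transfer the problem to one about $\tau$-conjugacy classes in $S(F)$ via the morphism $B_\theta \colon G \to S$, $\gamma \mapsto \gamma\gamma^{-\theta}$. Using $g_1^\theta = g_1^\tau$ for $g_1 \in G^\sigma$ and $g_2^\theta = g_2$ for $g_2 \in G^\theta$, I would first check that $B_\theta$ intertwines the $(G^\sigma \times G^\theta)$-action on $G$ with $\tau$-conjugation by $G^\sigma$ on $S$. Next, if $\gamma_1, \gamma_2 \in G(F)$ are strongly relatively $\tau$-regular and $x^{-1}\gamma_1\gamma_1^{-\theta}x^\tau = \gamma_2\gamma_2^{-\theta}$ for some $x \in G^\sigma(F)$, then a short computation exploiting $\sigma\tau = \tau\sigma$ and $x^\sigma = x$ shows that $y := \gamma_2^{-1}x^{-1}\gamma_1$ satisfies $y = y^\theta$, so $y \in G^\theta(F)$ and $\gamma_2 = x^{-1}\gamma_1 y^{-1}$. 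Thus the induced map from the strongly $\tau$-regular part of $\Gamma_r(F)$ into the set of $G^\sigma(F)$-$\tau$-orbits in $S(F)$ is injective, and it suffices to bound the number of such orbits whose adelic $G^\sigma(\mathbb{A}_F)$-$\tau$-orbit meets the compact set $B_\theta(C) \subset S(\mathbb{A}_F)$.

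I would then bound the number of geometric classes using the categorical quotient $\iota \colon S \to S /\!/ G^\sigma$ under $\tau$-conjugation. By Lemma \ref{closed} the $G^\sigma$-orbit of $\gamma\gamma^{-\theta}$ is closed, so over $\overline{F}$ these semisimple orbits are distinguished by their image under $\iota$. The composite $\iota \circ B_\theta \colon G \to S /\!/ G^\sigma$ sends $C$ to a relatively compact subset of $(S /\!/ G^\sigma)(\mathbb{A}_F)$; for any $\gamma \in G(F)$ satisfying the hypothesis, the $F$-point $\iota(B_\theta(\gamma))$ lies in that relatively compact subset, and finiteness follows from the discreteness of $(S /\!/ G^\sigma)(F)$ in $(S /\!/ G^\sigma)(\mathbb{A}_F)$.

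It then remains to count $F$-rational classes inside a single geometric class $\mathcal{O}$ with representative $\gamma$. By Theorem \ref{Stein} the stabilizer $G^{\sigma\theta}_\gamma$ is isomorphic to the torus $T := C^\tau_{G^\sigma, \gamma\gamma^{-\theta}}$, and the $F$-rational classes in $\mathcal{O}(F)$ inject into $H^1(F, T)$. At each place $v$ the subset $A_v \subset H^1(F_v, T)$ corresponding to local orbits meeting $C_v$ is finite, since $H^1$ of a torus over a local field is finite; and at almost all $v$ the integral-model and smoothness arguments from the proof of Proposition \ref{one} force $A_v = \{1\}$. The relevant global classes therefore lie in the preimage of the finite set $\prod_v A_v$ under localization, and this preimage is finite because the Tate--Shafarevich kernel $\ker(H^1(F, T) \to \prod_v H^1(F_v, T))$ is finite for a torus over a number field.

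The principal technical hurdle will be this last step: precisely identifying $F$-rational relative $\tau$-classes inside a geometric class with the appropriate subset of $H^1(F, T)$, matching the local parametrization against the hyperspecial/integral data at good places, and invoking the finiteness of the Tate--Shafarevich kernel. This is precisely the point at which, as advertised in the introduction, Galois cohomology takes the place of the reduction theory used in \cite{BDKV}.
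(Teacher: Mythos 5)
Your proposal is correct and follows essentially the same route as the paper's proof: first use the categorical quotient $S \to G^\sigma\backslash S$ and discreteness of $F$-points in adelic points to bound the number of geometric classes meeting $C$, then bound the rational classes within each geometric class by combining good reduction at almost all places (the integral-model argument recycled from Proposition~\ref{one}) with local and local-to-global finiteness in Galois cohomology. The only differences are presentational: you make explicit the injectivity of $\Gamma_r(F) \to G^\sigma(F)\backslash S(F)$ on strongly $\tau$-regular classes (which the paper leaves implicit, though the same computation appears in the proof of Theorem~\ref{Stein}), and you phrase the final local-to-global step via $H^1(F,T)$ and the finiteness of the Tate--Shafarevich kernel of the torus $T = C^\tau_{G^\sigma,\gamma\gamma^{-\theta}}$, where the paper invokes Labesse's group $\mathfrak{E}(G^{\sigma\theta}_\gamma, G^\sigma\times G^\theta; \mathbb{A}_F/F)$ and its finiteness; these amount to the same standard argument.
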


\noindent Assuming Proposition \ref{finite-elliptic} for the moment, we now prove Theorem \ref{geometric}.

%%Proof_geometric_theorem------------------------------------------------------
\begin{proof}[Proof of Theorem \ref{geometric}]
Suppose that $\ga$ is a relevant strongly relatively $\tau$-regular element. By Proposition \ref{one}, we have that
$$\mathrm{TRO}^{\mu}_{\gamma}(f)=\prod_{v}\mathrm{TRO}^{\mu_v}_{\gamma_v}(f_v) <\infty.$$Note that we have used the assumption that $G^{\sigma}$ is connected in order to apply Proposition \ref{one}.
Clearly $c^{\tau}(\gamma)<\infty$ by the relative $\tau$-ellipticity of $\gamma$. Letting $C$ be the closure of $\text{supp}(f)$ in Proposition \ref{finite-elliptic}, we see that
$$
\sum_{G^{\sigma}(F)\gamma G^{\theta}(F)\in \Gamma_r(F)} |c^{\tau}(\gamma)\mathrm{TRO}^{\mu}_{\gamma}(f)|<\infty.
$$Indeed, there are only finitely many nonzero summands. Therefore we derive that
{\allowdisplaybreaks\begin{align}
\sum_{\{\gamma\}}&c^{\tau}(\gamma) \mathrm{TRO}^{\mu}_{\gamma}(f)\label{TRO-eqn}\\
=&\sum_{\{\gamma\}} c^{\tau}(\gamma)\iint_{G_{\gamma}^{\sigma\theta}(\mathbb{A}_F)\backslash G^{\sigma}(\mathbb{A}_F)\times G^{\theta}(\mathbb{A}_F)}\mu(x, y^{-1})f(x^{-1}\gamma y)\frac{dxdy}{dt_{\gamma}}\nonumber\\
=&\sum_{\{\gamma\}} c^{\tau}(\gamma) \iint_{{}^2G_{\gamma}^{\sigma\theta}(\mathbb{A}_F)\backslash {}^2G^{\sigma}(\mathbb{A}_F)\times {}^2G^{\theta}(\mathbb{A}_F)}\mu(x, y^{-1})f^1(x^{-1}\gamma y)d\dot{x}d\dot{y}.\nonumber
\end{align}}Here $d\dot{x}d\dot{y}$ is the appropriate quotient measure, and the sums are over the relevant relatively $\tau$-regular elliptic semisimple classes.
 Notice that we have used our assumption that $\gamma$ is strongly relatively $\tau$-regular to  conclude that $(G_{\gamma}^{\sigma\theta})^{\circ}=G_{\gamma}^{\sigma\theta}$.  Also in the last equality of \eqref{TRO-eqn} we have used  \eqref{decom} and \eqref{f1}.

Notice that for irrelevant $\gamma$ the integral
$$
\iint_{G_{\gamma}^{\sigma\theta}(F)\backslash {}^2G^{\sigma}(\mathbb{A}_F)\times {}^2G^{\theta}(\mathbb{A}_F)}\mu(x, y^{-1})f^1(x^{-1}\gamma y)d\dot{x}d\dot{y}=0
$$
since it factors through
$$
\int_{G_{\gamma}^{\sigma\theta}(F)\backslash G_{\gamma}^{\sigma\theta}(\mathbb{A}_F)}\mu(g_1, g_2^{-1})dg_1dg_2
$$for appropriate measures $dg_1$ and $dg_2$. With this in mind, by the definition of the volume term $c^{\tau}(\gamma)$ in \eqref{const}, the last line in \eqref{TRO-eqn} is equal to
{\allowdisplaybreaks\begin{align*}
\sum_{\{\gamma\}} &\iint_{G_{\gamma}^{\sigma\theta}(F)\backslash {}^2G^{\sigma}(\mathbb{A}_F)\times {}^2G^{\theta}(\mathbb{A}_F)}\mu(x, y^{-1})f^1(x^{-1}\gamma y)d\dot{x}d\dot{y}\\
=&\iint_{G^{\sigma}(F)\times G^{\theta}(F)\backslash {}^2G^{\sigma}(\mathbb{A}_F)\times {}^2G^{\theta}(\mathbb{A}_F)}\mu(x, y^{-1})\sum_{\gamma\in G(F)}f^1(x^{-1}\gamma y)dxdy\\
=&\iint_{G^{\sigma}(F)\times G^{\theta}(F)\backslash {}^2G^{\sigma}(\mathbb{A}_F)\times {}^2G^{\theta}(\mathbb{A}_F)}\mu(x,y^{-1})K_{f^1}^e(x,y)dxdy.
\end{align*}}Here the sum in the second line is over strongly relatively $\tau$-regular elliptic elements by our assumptions on the support of $f$.
\end{proof}

\noindent We now need to prove Proposition \ref{finite-elliptic}.

%%Proof of the proposition-finite-elliptic---------------------------------------------------
\begin{proof}[Proof of Proposition \ref{finite-elliptic}]
Let $S$ be the subscheme of $G$ defined as in \eqref{S}.  We let $G^{\theta}$ act on $G$ by right multiplication, thus the map
$$
\begin{CD}
G @>{B_{\theta}}>>S
\end{CD}
$$is constant on $G^{\theta}$-orbits. Moreover, $B_{\theta}$ intertwines the action of $G^{\sigma}$ on $G$ by left multiplication with the action of $G^{\sigma}$ on $S$ by $\tau$-conjugation. Write
$$G^{\sigma}\backslash S=\mathrm{Spec} (\Gamma(S, \mathcal{O}_S)^{G^{\sigma}})\,.$$
Let $P: S \to G^{\sigma}\backslash S$ be the quotient map and consider the composite map
$$
\begin{CD}
G(\mathbb{A}_F) @>{B_{\theta}}>>S(\mathbb{A}_F) @>{P}>>(G^{\sigma}\backslash S)(\mathbb{A}_F)\,.
\end{CD}
$$Observe that
\begin{equation}\label{C}
(P\circ B_{\theta})(C)\cap (G^{\sigma}\backslash S)(F)
\end{equation}is finite since $(G^{\sigma}\backslash S)(F)$ is discrete in $(G^{\sigma}\backslash S)(\mathbb{A}_F)$ if we give the later set the canonical adelic topology. By Lemma \ref{closed}, if $\ga$ is a relatively $\tau$-semisimple element, then the $G^{\sigma}$ orbit of $\ga\ga^{-\theta}$ is a zariski closed subset of $S$. Thus if $\gamma, \gamma'\in G(F)$ are relatively $\tau$-semisimple and have the same  image in $(G^{\sigma}\backslash S)(\mathbb{A}_F)$, then
$$g^{-1}\gamma\gamma^{-\theta}g^{\tau}=\gamma'\gamma'^{-\theta}$$for some $g\in G^{\sigma}(\overline{F})$ (see \cite[\S 0.2]{GIT}). Since \eqref{C} is finite,  it follows that there are only finitely many geometric relative $\tau$-classes in $\Gamma_r^{\mathrm{ge}}(F)$ consisting of relatively $\tau$-semisimple elements that intersect $C$.

Thus to complete the proof of Proposition \ref{finite-elliptic}, it suffices to show that there are only finitely many relative $\tau$-classes that intersect $C$ and are in a fixed geometric relative $\tau$-class consisting of strongly relatively $\tau$-regular elements.  Let $G^{\sigma}(F) \gamma G^{\theta}(F)$ be a relative $\tau$-class with $\gamma \in G(F)$ strongly relatively $\tau$-regular.
From the proof of Proposition \ref{one}, there is a finite set of places $V$ of $F$ such that if $v\notin V$ then $\gamma_v\in C_v$ and if $\gamma'_v\in C_v$ is in the geometric relative $\tau$-class of $\gamma_v$, then 
$$
G^{\sigma}(F_v)\gamma_vG^{\theta}(F_v)=G^{\sigma}(F_v)\gamma'_vG^{\theta}(F_v).
$$
On the other hand, for $v\in V$, a standard Galois cohomology argument \cite[\S 1.8]{L} implies that there are only finitely many elements of $\Gamma_r(F_v)$ in a given geometric relative $\tau$-class in $\Gamma_r^{\mathrm{ge}}(F_v)$. In summary, there are only finitely many 
$$
G^{\sigma}(\A_F)\gamma' G^{\theta}(\A_F) \in \Gamma_r(\mathbb{A}_F)
$$
 that intersect $C$ such that $\ga_v\in G^{\sigma}(\overline{F}_v)\gamma'_vG^{\theta}(\overline{F}_v)$ for all places $v$. By another standard Galois cohomology argument, the set of elements in $\Gamma_r(F)$ contained in a given element of $\Gamma_r(\mathbb{A}_F)$ injects into the cohomology group $\mathfrak{E}(G^{\sigma\theta}_{\gamma}, G^{\sigma}\times G^{\theta};\mathbb{A}_F/F)$ (compare \cite[Lemme 1.8.5]{L}). For the definition of  $\mathfrak{E}(G^{\sigma\theta}_{\gamma}, G^{\sigma}\times G^{\theta};\mathbb{A}_F/F)$),  see \cite[\S 1.8]{L}. The group $\mathfrak{E}(G^{\sigma\theta}_{\gamma}, G^{\sigma}\times G^{\theta};\mathbb{A}_F/F)$ is finite by \cite[Proposition 1.8.4]{L}. This completes the proof.
\end{proof}

%%Spectral sides------------------------------------------------------------------------
\section{Spectral expansion}\label{Se}
Recall that
$$
\mu:=\mu_1\times \mu_2: G^{\sigma}(\mathbb{A}_F)\times G^{\theta}(\mathbb{A}_F)\longrightarrow \mathbb{C}^{\times},
$$
where $\mu_1$ and $\mu_2$ are unitary characters trivial on $G^{\sigma}(F)$ and $G^{\theta}(F)$, respectively. Let $L^2_0(G(F)\backslash {}^1G(\mathbb{A}_F))$ be the subspace of cusp forms in $L^2(G(F)\backslash {}^1G(\mathbb{A}_F))$.
%%Periods
Let $\pi$ be an automorphic representation of $G(\mathbb{A}_F)$ with unitary central character whose restriction to ${}^1G(\mathbb{A}_F)$ admits a realization in $L^2_0(G(F)\backslash {}^1G(\mathbb{A}_F))$.  For smooth $\phi$ in the $\pi$-isotypic subspace of $L^2_0(G(F)\backslash {}^1G(\mathbb{A}_F))$, the $(G^{\sigma}, \mu_1)$ and $(G^{\theta}, \mu_2)$-period integrals are defined by
\begin{align}
\mathcal{P}^{\sigma}_{\mu_1}(\phi)&:=\int_{ G^{\sigma}(F)\backslash {}^2G^{\sigma}(\mathbb{A}_F)}\mu_1(x)\phi(x)dx,\label{Period_int1}\\
\mathcal{P}^{\theta}_{\mu_2}(\phi)&:=\int_{ G^{\theta}(F)\backslash {}^2G^{\theta}(\mathbb{A}_F)}\mu_2(y)\phi(y)dy.\label{Period_int2}
\end{align}Here $${}^2G^{\sigma}(\mathbb{A}_F):=G^{\sigma}(\mathbb{A}_F)\cap{}^1G(\mathbb{A}_F)\quad\text{and} \quad{}^2G^{\theta}(\mathbb{A}_F):=G^{\theta}(\mathbb{A}_F)\cap{}^1G(\mathbb{A}_F)$$ and $dx$ (resp. $dy$) is the choice of Haar measure on
$G^{\sigma}(F)\backslash {}^2G^{\sigma}(\mathbb{A}_F)$ (resp. $G^{\theta}(F)\backslash {}^2G^{\theta}(\mathbb{A}_F)$) given in \eqref{2Gsigma-theta}. Both \eqref{Period_int1} and \eqref{Period_int2} converge absolutely  \cite[\S2 Proposition 1]{AGR}.

We now derive a spectral expansion of the integral appearing in Theorem \ref{geometric}. We follow Gelbart \cite[\S 1.2]{G} in our exposition. Let $R$ be the regular representation of ${}^1G(\mathbb{A}_F)$ on $L^2(G(F)\backslash {}^1G(\mathbb{A}_F))$. Since $G(F)\backslash {}^1G(\mathbb{A}_F)$ is not  in general compact,  $R$ does not decompose discretely. We write  $R_0$ for the subrepresentation of $R$ on the  invariant subspace $L^2_0(G(F)\backslash {}^1G(\mathbb{A}_F))\leq L^2(G(F)\backslash {}^1G(\mathbb{A}_F))$.  Then $R_0$ decomposes discretely.
  We make the following:
\begin{convention}
An  automorphic representation of ${}^1G(\mathbb{A}_F)$ is the restriction to ${}^1G(\mathbb{A}_F)$ of an automorphic representation of $G(\mathbb{A}_F)$. Two such representations are equivalent if they are equivalent as representations of ${}^1G(\mathbb{A}_F)$.
\end{convention}
\noindent Note that, with this convention, an automorphic representation  $\pi$ of $^1G(\mathbb{A}_F)$ admitting a realization in $R_0$ is unitary and hence has unitary central character.

For $f\in C_c^{\infty}(G(\mathbb{A}_F))$, let $f^1$ be defined as in \eqref{f1} and we write $R_0(f)=\sum_{\pi}m_{\pi}\pi(f^1)$ where the sum is over equivalence classes of irreducible subrepresentations of $R_0$ and $m_{\pi}$ is the mutiplicity of $\pi$ in $R_0$.
Equivalently $L_0^2(G(F)\backslash {}^1G(\mathbb{A}_F))=\sum V_{\pi}$ where $V_{\pi} \leq L_0^2(G(F) \backslash {}^1G(\mathbb{A}_F))$ is an invariant subspace realizing the $m_{\pi}$ copies of $\pi$. Let $\mathcal{B}_{\pi}$ be an orthonormal basis of $V_{\pi}$ with respect to the pairing
\begin{align}
V_{\pi}\times V_{\pi}&\longrightarrow \mathbb{C}\nonumber\\
(\phi_1, \phi_2) &\longmapsto\int_{G(F)\backslash {}^1G(\mathbb{A}_F)}\phi_1(x)\overline{\phi_2(x)}dx, \label{pair}
\end{align}with $dx$ the Tamagawa measure. Then each $m_{\pi}\pi(f^1)$ is an integral operator in $V_{\pi}$ with $L^2$-kernel
\begin{align} \label{smooth-kernel}
K_{\pi(f^1)}(x,y)=\sum_{\phi\in\mathcal{B}_{\pi}} (\pi(f^1)\phi)(x)\overline{\phi(y)}.
\end{align}
We note that, a priori, this sum only converges in $L^2_0(G(F) \backslash {}^1G(\mathbb{A}_F) \times G(F) \backslash {}^1G(\mathbb{A}_F))$.  However, Arthur has shown that there exists a (unique) square-integrable function that is smooth in $x$ and $y$ separately and represents this kernel \cite[Lemma 4.5 and Lemma 4.8]{A}.  We will denote this function by $K_{\pi(f^1)}(x,y)$.

%%%%TRT
We define 
\begin{equation}\label{TRT}
\mathrm{TRT}_{G^{\sigma},G^{\theta}}^{\mu}(\pi(f^1)):=\iint_{G^{\sigma}(F)\times G^{\theta}(F)  \backslash {}^2G^{\sigma}(\A_F) \times {}^2G^{\theta}(\A_F)}\mu(x,y^{-1})K_{\pi(f^1)}(x,y)dxdy.
\end{equation}
The integrand is smooth separately in $x$ and $y$ and is in the cuspidal subspace, hence rapidly decreasing by \cite[\S 4]{HC}.  Therefore it is absolutely convergent by \cite[\S2 Proposition 1]{AGR}.  Let $K_{\infty}$ be a maximal compact subgroup of $G(\A_{F,\infty})$.  We note that if $f^1$ is $K_{\infty}$-finite, then the sum over $\phi$ in \eqref{smooth-kernel} can be taken to be  a finite sum over $K_{\infty}$-finite functions
\cite[Above Lemma 4.5]{A}.  Therefore, in this case
$$
\mathrm{TRT}_{G^{\sigma},G^{\theta}}^{\mu}(\pi(f^1))=\sum_{\phi \in \mathcal{B}_{\pi}} \mathcal{P}^{\sigma}_{\mu_1}(\pi(f^1)\phi) \overline{\mathcal{P}^{\theta}_{\mu_2}(\phi)}.
$$
In general, if $\mathrm{TRT}_{G^{\sigma},G^{\theta}}^{\mu}(\pi(f^1)) \neq 0$, then $\pi$ is both $(G^{\sigma},\mu_1)$ and $(G^{\theta},\mu_2)$-distinguished.

If $R(f^1)$ has  image in $L_0^2(G(F)\backslash {}^1G(\mathbb{A}_F))$, its kernel has $L^2$-expansion
\begin{equation}\label{kernel-spectral}
K_{f^1}(x,y)=\sum_{\pi}K_{\pi(f^1)}(x,y),
\end{equation}
where the sum is over equivalence classes of automorphic representations $\pi$ of $^{1}G(\mathbb{A}_F)$ admitting a realization in $R_0$. Again, this kernel is represented by a (unique) square integrable function that is smooth in $x$ and $y$ separately \cite[Lemma 4.5 and Lemma 4.8]{A}.  We denote this function by $K_{f^1}(x,y)$ as well.  Then, by \cite[\S 4]{A}, the equation in \eqref{kernel-spectral} holds pointwise.

%%Proposition-Spectral------------------------------------------------------------
\begin{proposition}\label{spectral}
Let $f^1 \in  C_c^{\infty}({}^1G(\mathbb{A}_F))$ be defined as \eqref{f1}. Assume that the integral operator $R(f^1)$ has image in $L_0^2(G(F)\backslash {}^1G(\mathbb{A}_F))$. Then we have that
\begin{equation}\label{spectral-eqn}
\iint_{G^{\sigma}(F)\times G^{\theta}(F) \backslash {}^2G^{\sigma}(\mathbb{A}_F)\times {}^2G^{\theta}(\mathbb{A}_F)}\mu(x,y^{-1})K_{f^1}(x,y)dxdy=\sum_{\pi}\mathrm{TRT}_{G^{\sigma},G^{\theta}}^{\mu}(\pi(f^1)),
\end{equation}where the sum on the right is over equivalence classes of irreducible automorphic representations $\pi$ of ${}^1G(\mathbb{A}_F)$ occurring in $R_0$.  The integrand is absolutely integrable and the sum is absolutely convergent.
\end{proposition}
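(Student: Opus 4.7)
The plan is to substitute the pointwise spectral expansion $K_{f^1}(x,y)=\sum_\pi K_{\pi(f^1)}(x,y)$ from \eqref{kernel-spectral} into the left-hand side of \eqref{spectral-eqn} and interchange the summation over $\pi$ with the double integral. Two ingredients are required: absolute convergence of each individual term $\mathrm{TRT}^\mu_{G^\sigma,G^\theta}(\pi(f^1))$ on the right, and absolute convergence of the sum over $\pi$ so that Fubini applies.

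First I would dispose of the individual convergence statements. Since $R(f^1)$ has image in $L_0^2(G(F)\backslash{}^1G(\mathbb{A}_F))$ by hypothesis, for each fixed $x$ the function $y\mapsto K_{f^1}(x,y)$ lies in the cuspidal subspace, and similarly for $x\mapsto K_{f^1}(x,y)$. The smooth representative guaranteed by \cite[Lemma 4.5 and Lemma 4.8]{A} is therefore smooth and cuspidal in each variable separately, hence rapidly decreasing. Applying \cite[\S 2 Proposition 1]{AGR} over ${}^2G^\sigma(\mathbb{A}_F)$ and ${}^2G^\theta(\mathbb{A}_F)$ (modulo rational points) yields absolute convergence of the integral on the left. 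The identical argument applied to each $K_{\pi(f^1)}$, which is likewise smooth and cuspidal in each variable by Arthur, gives absolute convergence of each $\mathrm{TRT}^\mu_{G^\sigma,G^\theta}(\pi(f^1))$ on the right.

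To justify the interchange I would first reduce to the case that $f^1$ is $K_\infty$-finite, both sides being linear in $f^1$ and the $K_\infty$-finite functions in $C_c^\infty({}^1G(\mathbb{A}_F))$ being dense in the appropriate sense. For such $f^1$, a Dixmier--Malliavin factorization $f^1 = h_1 * h_2$ with $h_1,h_2 \in C_c^\infty({}^1G(\mathbb{A}_F))$ exhibits $R_0(f^1) = R_0(h_1)R_0(h_2)$ as the product of two Hilbert--Schmidt operators on $L_0^2(G(F)\backslash{}^1G(\mathbb{A}_F))$. Thus $R_0(f^1)$ is trace class with $\sum_\pi \|\pi(f^1)\|_1 < \infty$. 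Expanding $K_{\pi(f^1)}(x,y)$ in a jointly orthonormal basis and applying Cauchy--Schwarz bounds the $\mu$-twisted double period of $K_{\pi(f^1)}$ by a product of AGR-bounded quantities involving $K_{\pi(h_i)}$, which sum over $\pi$ to the product of the Hilbert--Schmidt norms of $R_0(h_1)$ and $R_0(h_2)$. This yields $\sum_\pi |\mathrm{TRT}^\mu_{G^\sigma,G^\theta}(\pi(f^1))| < \infty$, and Fubini then produces the claimed identity.

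The main obstacle is obtaining the uniform-in-$\pi$ Cauchy--Schwarz bound on the twisted double period of $K_{\pi(f^1)}$ by Hilbert--Schmidt data, since the integration domain is noncompact and $\mu$ contributes no decay. The key point is that cuspidality and AGR together furnish the rapid decay needed to pull the absolute value inside the spectral expansion, after which the factorization through Hilbert--Schmidt operators closes the estimate.
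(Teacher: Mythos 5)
Your opening paragraph, establishing absolute convergence of the left-hand side and of each individual $\mathrm{TRT}^{\mu}_{G^{\sigma},G^{\theta}}(\pi(f^1))$ via smoothness, cuspidality, rapid decay, and \cite[\S 2, Proposition 1]{AGR}, is correct and coincides with the paper. The overall shape of your interchange argument (factor $f^1$, apply Cauchy--Schwarz to reduce to diagonal kernels, sum over $\pi$) is also the right skeleton. However, two of your steps contain genuine gaps that the paper is careful to avoid.

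First, you reduce to $K_{\infty}$-finite $f^1$ ``both sides being linear in $f^1$ and the $K_{\infty}$-finite functions in $C_c^{\infty}({}^1G(\mathbb{A}_F))$ being dense in the appropriate sense.'' The paper explicitly flags, citing \cite[p.~931]{A}, that $K_{\infty}$-finite functions in $C_c^{r}({}^1G(\A_F))$ are \emph{not} known to be dense in $C_c^{r}({}^1G(\A_F))$. Even granting some form of density, linearity alone does not let you pass the infinite sum $\sum_{\pi}\mathrm{TRT}^{\mu}_{G^{\sigma},G^{\theta}}(\pi(f^1))$ through a limit: you need \emph{continuity} of both sides in $f^1$ in a topology where the approximation converges, and that continuity is exactly what is nonobvious. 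The paper handles this by viewing a general $h\in C_c^{r}({}^1G(\A_F))$ inside $C_c^{r-l_0}$, approximating it there by an absolutely convergent countable sum of $K_{\infty}$-finite functions, and showing that the relevant quantities are dominated by a continuous seminorm pulled back to $C_c^{r}$. Your proposal does not engage with this.

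Second, and more seriously, the step ``which sum over $\pi$ to the product of the Hilbert--Schmidt norms of $R_0(h_1)$ and $R_0(h_2)$'' does not close the argument. The Hilbert--Schmidt (or trace) norm of $R_0(h_i)$ controls an $L^2$ integral of the kernel over $(G(F)\backslash {}^1G(\A_F))^2$; what you need is finiteness of a sum of \emph{period integrals} over the noncompact domain $G^{\sigma}(F)\times G^{\theta}(F)\backslash {}^2G^{\sigma}(\A_F)\times{}^2G^{\theta}(\A_F)$. Applying AGR to each $\pi$ separately and then summing the resulting constants $C_\pi$ requires $\sum_\pi C_\pi<\infty$, and the trace-class or HS bound does not furnish this. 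The correct order of operations is to perform the Cauchy--Schwarz pointwise, sum over $\pi$ \emph{before} integrating, and then show that the resulting diagonal kernel sums $\sum_{\pi}K_{\pi(h_i*h_i^*)}(x,x)$ and $\sum_{\pi}K_{\pi(g_i^**g_i)}(y,y)$ are \emph{rapidly decreasing} in $x$ and $y$; this is what lets one apply AGR a single time. That rapid decay is not a formal consequence of cuspidality in each variable separately, since on the diagonal both variables escape to infinity simultaneously. The paper obtains it from Arthur's truncation operator $\Lambda_2^T$: by \cite[Theorem 14.1(a)]{Asurvey} and \cite[\S 1]{A2} one has $\sum_{\pi}|\Lambda_2^T K_{\pi(h_i*h_i^*)}(x,x)|\leq C\|x\|^{-L}$, and for unitary $\pi$ realized in $L_0^2$ the truncation is the identity, so the untruncated sum inherits the decay. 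Your proposal contains no analogue of this truncation input, and without it the Lebesgue dominated convergence step does not go through. Also note in passing that the paper uses the decomposition $h=\sum_{i=1}^{2}h_i*g_i$ of \cite[Corollary 4.2]{A} with $g_i$ fixed independently of $h$, not a Dixmier--Malliavin factorization $f^1=h_1*h_2$; the former is what makes the seminorm estimates uniform.
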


%%Proof_spectral----------------------------------------------------------
\begin{proof}
As noted above, $K_{f^1}(x,y)$ is a smooth function that is (by definition) in the closed subspace $L^2_0(G(F) \backslash {}^1G(\A_F) \times G(F) \backslash {}^1G(\A_F))$.  It is therefore rapidly decreasing \cite[\S 4]{HC}.  Thus, by \cite[\S 2, Proposition 1]{AGR} the integral on the left in \eqref{spectral-eqn} is absolutely convergent. Formally, we have that
{\allowdisplaybreaks\begin{align*}
&\iint_{G^{\sigma}(F)\times G^{\theta}(F) \backslash {}^2G^{\sigma}(\mathbb{A}_F)\times {}^2G^{\theta}(\mathbb{A}_F)} \mu(x,y^{-1})K_{f^1}(x,y)dxdy\\
&=\iint_{G^{\sigma}(F)\times G^{\theta}(F) \backslash {}^2G^{\sigma}(\mathbb{A}_F)\times {}^2G^{\theta}(\mathbb{A}_F)}\mu(x,y^{-1})\sum_{\pi}K_{\pi(f^1)}(x,y)dxdy\\
&=\sum_{\pi}\iint_{G^{\sigma}(F)\times G^{\theta}(F) \backslash {}^2G^{\sigma}(\mathbb{A}_F)\times {}^2G^{\theta}(\mathbb{A}_F)}\mu(x,y^{-1})K_{\pi(f^1)}(x,y)dxdy\\
&=\sum_{\pi}\mathrm{TRT}_{G^{\sigma},G^{\theta}}^{\mu}(\pi(f^1)).
\end{align*}}In order to make this rigorous,  we must justify switching the sum and integral.  As noted above, $K_{f^1}(x,y)=\sum_{\pi}K_{\pi(f_1)}(x,y)$ pointwise.  By the Lebesgue dominated convergence theorem it suffices to prove that
$\sum_{\pi} |K_{\pi(f^1)}(x,y)|$
is integrable over $G^{\sigma}(F)\times G^{\theta}(F)\backslash {}^2G^{\sigma}(\mathbb{A}_F)\times {}^2G^{\theta}(\mathbb{A}_F)$.

Choose an integer $m >0$.  If $m$ is sufficiently large, then there exists $K_{\infty}$-finite functions $g_i \in C_c^m({}^1G(\A_F))$ and an element $Z$ in the universal enveloping algebra $\mathfrak{U}(\mathrm{Lie}({}^1G(\A_{F,\infty})) \otimes \CC)$ fixed by $K_{\infty}$ (?) with the following property: If $r_0 > \mathrm{deg}(Z)$ and $h \in C_c^{r_0}({}^1G(\A_F))$ we can write
$$
h=\sum_{i=1}^2 h_i* g_i,
$$
where $h_1=h * Z$ and $h_2=h$ \cite[Corollary 4.2]{A}.  Notice that the $h_i$ are $K_{\infty}$-finite if $h$ is $K_{\infty}$-finite.

Write
$$
g^*(x):=\overline{g(x^{-1})}
$$
for compactly supported continuous functions $g \in C_c({}^1G(\A_F))$.
Assume for the moment that $h \in C_c^{r}({}^1G(\A_F))$ is $K_{\infty}$-finite for some $r \geq r_0$.
Applying the Cauchy-Schwarz inequality several times we have
\begin{align*}
|K_{\pi(h)}(x,y)|&=\Big|\sum_{\phi \in \mathcal{B}_{\pi}}(\pi(h)\phi)(x) \overline{\phi(y)}\Big|=\Big|\sum_{i=1}^2 \sum_{\phi \in \mathcal{B}_{\pi}} (\pi(h_i)\phi)(x) \overline{(\pi(g_i^*)\phi)(y)}\Big|\\
&\leq \sum_{i=1}^2 \Big|\sum_{\phi \in \mathcal{B}_{\pi}} (\pi(h_i)\phi)(x)\Big|\Big|\sum_{\phi \in \mathcal{B}_{\pi}}\overline{(\pi(g_i^*)\phi)(y)}\Big|\\
&\leq \sum_{i=1}^2\Big(\sum_{\phi \in \mathcal{B}_{\pi}}|(\pi(h_i)\phi)(x)|\Big) \Big( \sum_{\phi \in \mathcal{B}_{\pi}}|(\pi(g_i^*)\phi)(y)| \Big)\\
&=\sum_{i=1}^2 K_{\pi(h_i*h_i^*)}(x,x) K_{\pi(g_i^* * g_i)}(y,y).
\end{align*}
Here the $\phi \in \mathcal{B}_{\pi}$ are assumed to be $K_{\infty}$-finite (and hence smooth) and the sums can be taken to be finite because we have assumed that $h$ is $K_{\infty}$-finite. We note that $K_{\pi(h_i*h_i^*)}(x,x)$ and $K_{\pi(g_i^* * g_i)}(y,y)$ are nonnegative functions.  Thus, applying the Cauchy-Schwarz inequality again we have
\begin{align} \label{last-ineq}
\sum_{\pi}|K_{\pi(h)}(x,y)| &\leq \sum_{\pi} \sum_{i=1}^2K_{\pi(h_i*h_i^*)}(x,x) K_{\pi(g_i^* * g_i)}(y,y) \\&\leq \nonumber \sum_{i=1}^2\Big(\sum_{\pi}K_{\pi(h_i*h_i^*)}(x,x)\Big) \Big(\sum_{\pi} K_{\pi(g_i^* * g_i)}(y,y)\Big).
\end{align}

We claim that the estimate \eqref{last-ineq} is valid without assuming that $h$ is $K_{\infty}$-finite.
For each $r$, give $C_c^r({}^1G(\A_F))$ the topology explained in \cite[\S 2]{A}.  Then there exists a continuous seminorm $\|\cdot\|_{0}$ on $C_c({}^1G(\A_F))$ and a positive integer $N$ depending only on $G$ such that for each $K_{\infty}$-finite $h \in C_c^{r}({}^1G(\A_F))$ with $r\geq r_0$ we have
$$
\Big(\sum_{\pi}K_{\pi(h_i*h_i^*)}(x,x)\Big) \Big(\sum_{\pi} K_{\pi(g_i^* * g_i)}(y,y)\Big) \leq \|h_i*h_i^*\|_0^{1/2}\|g_i^* *g_i\|_0^{1/2}\|x\|^N\|y\|^N
$$and hence
$$
\sum_{\pi}|K_{\pi(h)}(x,y)| \leq \sum_{i=1}^2\|h_i*h_i^*\|_0^{1/2}\|g_i^* *g_i\|_0^{1/2}\|x\|^N\|y\|^N
$$(see \cite[Proof of Lemma 4.4]{A}).
Here $\|\cdot\|$ is the height function on ${}^1G(\A_F)$ given in \cite[\S 2]{A}.  It follows that for fixed $(x,y)$ the real-valued functions
\begin{align} \label{funcs}
h \mapsto \sum_{\pi}|K_{\pi(h)}(x,y)| \textrm{ and }h \mapsto \sum_{i=1}^2\Big(\sum_{\pi}K_{\pi(h_i*h_i^*)}(x,x)\Big) \Big(\sum_{\pi} K_{\pi(g_i^* * g_i)}(y,y)\Big)
\end{align}
are continuous on the $K_{\infty}$-finite subset of $C_c^{r}({}^1G(\A_F))$.  If they admitted continuous extensions to $C_c^{r}({}^1G(\A_F))$, then our claim would be proven.  Due to the fact that $K_{\infty}$-finite functions in $C_c^r({}^1G(\A_F))$ are not in general dense in $C_c^{r}({}^1G(\A_F))$,
we are not sure whether this statement is true \cite[p.~931]{A}.  Instead, following Arthur, we view 
a general $h \in C^r_c({}^1G(\A_F))$ as an element of $C_c^{r-l_0}({}^1G(\A_F))$ and approximate it in $C_c^{r-l_0}({}^1G(\A_F))$ with a sequence of $K_{\infty}$-finite functions.  Here $l_0$ is a certain positive integer.  This will allow us to deduce the claim.

Let $x$ and $y$ be fixed and let $h \in C_c^{r}({}^1G(\A_F))$; we do not assume $h$ is $K_{\infty}$-finite.  We assume that $r>l_0$, where $l_0$ is the integer discussed after the proof of \cite[Lemma 4.4]{A}; it depends only on $G$.  For each such $h$ there is a countable set of $K_{\infty}$-finite functions $h_{\omega} \in C_c^{r-l_0}({}^1G(\A_F))$ such that
$\sum_{\omega}h_{\omega}$ converges absolutely to $h$ in $C_c^{r-l_0}({}^1G(\A_F))$ and if $\|\cdot\|_?$ is any continuous seminorm on $C_c^{r-l_0}({}^1G(\A_F))$ then
$$
f \mapsto \sum_{\omega} \|f_{\omega}\|_?
$$
is again a continuous seminorm on $C_c^{r}({}^1G(\A_F))$ (see after the proof of \cite[Lemma 4.4]{A}).
Applying this with
\begin{align*}
\|h\|_{r_0}=\sum_{i=1}^2\|h_i *h_i^*\|_0^{1/2}\|g_i^**g_i\|_0^{1/2} \|x\|^N\|y\|^N 
\end{align*}
 implies that the functions in \eqref{funcs} extend to define continuous functions on $C_c^{r-l_0}({}^1G(\A_F))$.  This implies our claim that \eqref{last-ineq} is valid for all $h \in C_c^{r-l_0}({}^1G(\A_F))$.

Choose a positive integer $L>0$.  Enlarging $m$ if necessary in order to make $g_1 \in C^m_c({}^1G(\A_F))$ smoother,  
we apply the proof of \cite[Theorem 14.1(a)]{Asurvey} (see also \cite[\S 2]{A2}) to conclude that 
$$
\sum_{\pi} |\Lambda_2^TK_{\pi(h_i*h_i^*)}(x,x)| \leq C(h_i) \|x\|^{-L}\quad\text{and}
$$
$$
\sum_{\pi}|\Lambda_2^TK_{\pi(g_i^*g_i)}(y,y)| \leq C(g_i) \|y\|^{-L},
$$
where $\Lambda_2^T$ is Arthur's truncation operator (see \cite[\S 1]{A2}), $C(h_i), C(g_i)$ are constants, and $T$ is assumed to be sufficiently regular.  On the other hand, 
$$
\Lambda_2^TK_{\pi(h_i*h_i^*)}(x,x)=K_{\pi(h_i*h_i^*)}(x,x),\quad\Lambda_2^TK_{\pi(g_i^**g_i)}(y,y)=K_{\pi(g_i^**g_i)}(y,y)
$$
for unitary $\pi$ realized in $L_0^2(G(F)\backslash {}^1G(\mathbb{A}_F))$ \cite[\S 1]{A2}.  Thus
\begin{align*}
\sum_{\pi}K_{\pi(h_i*h_i^*)}(x,x) \quad\textrm{and}\quad \sum_{\pi}K_{\pi(g_i* g_i^*)}(y,y)
\end{align*}
are both rapidly decreasing.  
Applying the proof of \cite[\S 2 Proposition 1]{AGR} we see that
$$
\iint_{G^{\sigma}(F)\times G^{\theta}(F)\backslash  {}^2G^{\sigma}(\A_F)  \times{}^2G^{\theta}(\A_F)}\Big(\sum_{\pi}K_{\pi(h_i*h_i^*)}(x,x)\Big)\Big( \sum_{\pi}K_{\pi(g_i^**g_i)}(y,y)\Big)dxdy<\infty.
$$
In view of \eqref{last-ineq} an application of the  Lesbesgue dominated convergence theorem completes the proof of the proposition.
\end{proof}

%%Lemma of L_0------------------------------------------
\begin{lemma}\label{L_0}
Suppose that $f=\otimes'_vf_v\in C_c^{\infty}(G(\mathbb{A}_F))$. If $f_v$ is $F$-supercuspidal for some finite place $v$, then $R(f^1)$ has image in $L_0^2(G(F)\backslash {}^1G(\mathbb{A}_F))$.
\end{lemma}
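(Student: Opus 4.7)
The plan is to show that for every $\phi\in L^2(G(F)\backslash{}^1G(\mathbb{A}_F))$, the function $R(f^1)\phi$ is cuspidal, and hence lies in $L_0^2(G(F)\backslash{}^1G(\mathbb{A}_F))$. Since $f^1\in C_c^{\infty}({}^1G(\mathbb{A}_F))$, $R(f^1)\phi$ is smooth and is represented by the kernel convolution $\int_{G(F)\backslash{}^1G(\mathbb{A}_F)} K_{f^1}(x,y)\phi(y)\,dy$, where $K_{f^1}(x,y)=\sum_{\gamma\in G(F)}f^1(x^{-1}\gamma y)$ is a pointwise finite sum. By Fubini, the vanishing of the $P$-constant term of $R(f^1)\phi$ for each proper parabolic $F$-subgroup $P=MN$ of $G$ reduces to
\[
\int_{N(F)\backslash N(\mathbb{A}_F)} K_{f^1}(nx,y)\,dn=0 \qquad \text{for all } x,y\in{}^1G(\mathbb{A}_F).
\]

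A key preliminary observation is that $f^1$ factors as a pure tensor with $f^1_w=f_w$ at every finite place $w$. Indeed, the averaging defining $f^1$ in \eqref{f1} is over $A\backslash(A_G^{\sigma}\times A_G^{\theta})$; since $A_G\leq Z_G(F\otimes_{\mathbb{Q}}\mathbb{R})$ is archimedean and central in $G$, the translation $x\mapsto z_\sigma^{-1}z_\theta x$ affects only the archimedean component. Writing $f=\otimes_w f_w$, this gives $f^1=f_\infty^1\otimes\bigl(\otimes_{w\nmid\infty} f_w\bigr)$, so in particular $f^1_v=f_v$ remains $F$-supercuspidal.

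To finish I would carry out the standard unfolding of the $N$-constant term. Using $G(F)=\bigsqcup_{\gamma\in N(F)\backslash G(F)}N(F)\gamma$ and exchanging sum and integral (justified because the sum in $K_{f^1}$ is pointwise finite and $N(F)\backslash N(\mathbb{A}_F)$ is compact), one obtains
\[
\int_{N(F)\backslash N(\mathbb{A}_F)} K_{f^1}(nx,y)\,dn=\sum_{\gamma\in N(F)\backslash G(F)}\int_{N(\mathbb{A}_F)} f^1(x^{-1}n\gamma y)\,dn
\]
after the substitution $n\mapsto n^{-1}$. Since $f^1$ is compactly supported, each adelic integral on the right factors as a product of local integrals over the places of $F$; the factor at $v$ is
\[
\int_{N(F_v)} f_v(x_v^{-1}n_v\gamma_v y_v)\,dn_v,
\]
which vanishes because the integrand is a left-right translate of $f_v$ restricted to the unipotent radical of the proper $F$-parabolic $P_{F_v}$, and $f_v$ is $F$-supercuspidal by hypothesis. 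Hence every summand is zero, the constant term of $K_{f^1}$ in its first argument vanishes identically, and $R(f^1)\phi$ is cuspidal. The only mild obstacle is the bookkeeping for the factorization of $f^1$ and the interchange of the locally finite sum with the $N$-integral; both are routine once one notes the compact-support constraints on the sums and integrals.
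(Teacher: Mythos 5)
Your proposal is correct and follows essentially the same route as the paper: compute the constant term along $N(F)\backslash N(\mathbb{A}_F)$, unfold, and reduce to the vanishing of $\int_{N(\mathbb{A}_F)}f^1(x^{-1}n\gamma y)\,dn$, which factors over places and dies at $v$ by $F$-supercuspidality. The paper's unfolding is marginally slicker (it unfolds $R(f^1)\psi$ directly over $N(F)$ rather than decomposing $G(F)=\bigsqcup N(F)\gamma$ inside the kernel), and it leaves the observation that $f^1$ agrees with $f$ at finite places implicit, but these are cosmetic differences.
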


\begin{proof}
For any $\psi\in L^2(G(F)\backslash {}^1G(\mathbb{A}_F))$, and for the unipotent radical $N$ of any proper parabolic subgroup of $G$, we have that
{\allowdisplaybreaks\begin{align*}
&\int_{N(F)\backslash N(\mathbb{A}_F)}R(f^1)\psi(n^{-1}x)dn\\
&=\int_{N(F)\backslash N(\mathbb{A}_F)}\Big(\int_{N(F)\backslash{}^1G(\mathbb{A}_F)}\sum_{y\in N(F)}f^1(x^{-1}nyg)\psi(g)dg\Big)dn\\
&=\int_{N(F)\backslash {}^1G(\mathbb{A}_F)}\Big(\int_{N(\mathbb{A}_F)}f^1(x^{-1}ng)dn\Big)\psi(g)dg=0,
\end{align*}}since we have that
$$\int_{N(\mathbb{A}_F)}f^1(x^{-1}ng)dn=\prod_{v| \infty}\int_{N(F_v)}f^1_v(x_v^{-1}n_vg_v)dn_v\prod_{v\nmid \infty}\int_{N(F_v)}f_v(x_v^{-1}n_vg_v)dn_v=0$$for all $x\in {}^1G(\mathbb{A}_F)$.
\end{proof}

%%%Main result again
\noindent We now restate and prove the main theorem of this paper.
\begin{theorem}
Suppose that $G^{\sigma}$ is connected. Let $f=\otimes_v f_v\in C_c^{\infty}(G(\mathbb{A}_F))$ be such that there exist places $v_1, v_2, v_3$ of $F$ (not necessarily distinct) such that
\begin{enumerate}
\item{$f_{v_1}$ is supported on relatively $\tau$-elliptic elements of $G(F_{v_1})$,}
\item{$f_{v_2}$ is supported on strongly relatively $\tau$-regular elements of $G(F_{v_2})$,}
\item{$f_{v_3}$ is $F$-supercuspidal.}
\end{enumerate}
Then we have that
\begin{align}\label{main-eqn}
\sum_{\{\gamma\}}c^{\tau}(\gamma) \mathrm{TRO}^{\mu}_{\gamma}(f)=\sum_{\pi}\mathrm{TRT}_{G^{\sigma}, G^{\theta}}^{\mu}(\pi(f^1)),
\end{align}where the sum on the left is over relevant strongly relatively $\tau$-regular elliptic  classes in $G(F)$ and the sum on the right is over equivalence classes of automorphic representations $\pi$ of ${}^1G(\mathbb{A}_F)$ admitting a realization in $L^2_0(G(F)\backslash {}^1G(\mathbb{A}_F))$. 
\end{theorem}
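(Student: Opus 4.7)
The strategy is to show that both sides of \eqref{main-eqn} equal the common adelic integral
$$
I:=\iint_{G^{\sigma}(F)\times G^{\theta}(F)\backslash {}^2G^{\sigma}(\mathbb{A}_F)\times {}^2G^{\theta}(\mathbb{A}_F)}\mu(x,y^{-1})\,K_{f^1}(x,y)\,dxdy\,,
$$
by applying Theorem \ref{geometric} on the geometric side and Proposition \ref{spectral} on the spectral side.

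Hypotheses (1) and (2) are exactly those required by Theorem \ref{geometric}, which identifies the left-hand side of \eqref{main-eqn} with the adelic integral above but with $K_{f^1}$ replaced by its elliptic truncation $K_{f^1}^e$. On the spectral side, hypothesis (3) combined with Lemma \ref{L_0} shows that $R(f^1)$ has image in $L^2_0(G(F)\backslash {}^1G(\mathbb{A}_F))$, whereupon Proposition \ref{spectral} identifies the right-hand side of \eqref{main-eqn} with $I$ itself.

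It therefore remains to show that $K_{f^1}(x,y)=K_{f^1}^e(x,y)$ for all $(x,y)\in {}^2G^{\sigma}(\mathbb{A}_F)\times {}^2G^{\theta}(\mathbb{A}_F)$. For this, the direct computation (using $x^{\sigma}=x$ and $y^{\theta}=y$)
$$
(x^{-1}\gamma y)(x^{-1}\gamma y)^{-\theta}=x^{-1}(\gamma\gamma^{-\theta})x^{\tau}
$$
shows that $(x^{-1}\gamma y)(x^{-1}\gamma y)^{-\theta}$ is $\tau$-conjugate to $\gamma\gamma^{-\theta}$ by an element of $G^{\sigma}$. Consequently, if $f^1(x^{-1}\gamma y)\neq 0$, then hypothesis (1) at $v_1$ forces $\gamma\gamma^{-\theta}$ to be $\tau$-elliptic in $G(F_{v_1})$, while hypothesis (2) at $v_2$ forces $C^{\tau}_{G^{\sigma}_{F_{v_2}},\gamma\gamma^{-\theta}}$ to be a connected torus. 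Since both properties descend from a local place to $F$ (anisotropy of a torus and the property of being a connected torus are preserved under such descent), the element $\gamma\in G(F)$ is strongly relatively $\tau$-regular and relatively $\tau$-elliptic, and hence contributes to $K_{f^1}^e$.

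This forces the two kernels to agree on the domain of integration, which yields \eqref{main-eqn}. The only conceptual step is the final local-to-global translation of the support conditions, and that is routine; the essential content of the theorem is already contained in Theorem \ref{geometric}, Proposition \ref{spectral}, and Lemma \ref{L_0}, and the present theorem is simply the result of gluing these three ingredients together.
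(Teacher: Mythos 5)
Your proof follows exactly the paper's approach: apply Theorem \ref{geometric} to the geometric side, Lemma \ref{L_0} together with Proposition \ref{spectral} to the spectral side, and observe that the support hypotheses on $f$ force $K_{f^1}^e(x,y)=K_{f^1}(x,y)$ on the domain of integration. The paper merely asserts this last equality ``by our assumptions on the support of $f$,'' while you correctly spell it out via the identity $B_{\theta}(x^{-1}\gamma y)=x^{-1}(\gamma\gamma^{-\theta})x^{\tau}$ and the local-to-global descent of $\tau$-ellipticity and strong $\tau$-regularity.
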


\begin{proof}
Note that $K_{f^1}^e(x,y)=K_{f^1}(x,y)$ by our assumptions on support of $f$. Hence the theorem follows immediately from Theorem \ref{geometric} and Proposition \ref{spectral}.
\end{proof}

\section*{Acknowledgments}
I would like to thank the referees for their useful comments and corrections. I also thank Jayce Getz for his encouragement and helpful comments.

%% -----------------------------------------------------------------

\end{document}